\numberwithin{equation}{section}
\newcommand{\la}{\left|}
\newcommand{\ra}{\right|}
\newcommand{\lp}{\left(}
\newcommand{\rp}{\right)}
\newcommand{\lc}{\left[}
\newcommand{\rc}{\right]}
\newcommand{\ds}{\displaystyle}
\newcommand{\ZZ}{\mathbb{Z}}
\newcommand{\RR}{\mathbb{R}}
\newcommand{\ts}{\textstyle}
\newtheorem{theorem}[equation]{Theorem}
\newtheorem{lemma}[equation]{Lemma}
\newtheorem{proposition}[equation]{Proposition}
\theoremstyle{definition}
\title{Some properties of generalized $k$-Pell sequences} 
\author{Elijah Soria} 
\address{Department of Mathematics and Computer Science, Saint Mary's College of California, 1928 Saint Mary's Rd., Moraga, CA 94575}
\email{eps4@stmarys-ca.edu} 
\date{\today}
\thanks{This research was funded by the 2015 School of Science Summer Research Program at Saint Mary's College of California.}
\begin{document}
\maketitle
\begin{abstract}
\par The purpose of this paper is twofold; (1) to develop several identities for the Generalized $k$-Pell sequence (including those of Binet, Catalan, Cassini, and d'Ocagne), and (2) to study applications of tridiagonal generating matrices for the $k$-Pell and Generalized $k$-Pell sequences.
\end{abstract}
\section{introduction}
\par Since the discovery of the famous Fibonacci sequence and its seemingly innate connection to the natural world, mathematicians have been fascinated by recursive sequences and their properties. For example, the sequence \[\frac{1}{1}, \frac{3}{2}, \frac{7}{5}, \frac{17}{12},\cdots\]
of best rational approximations of $\sqrt{2}$ gives rise to three such recursive integer sequences: the Pell $\{P_n\}$, Pell-Lucas $\{Q_n\}$, and Modified Pell $\{q_n\}$ recursive sequences. These sequences are defined, respectively, as follows. \begin{align*}P_n &= 2P_{n-1}+P_{n-2};\ P_0 = 0, P_1=1\\ Q_n &= 2Q_{n-1}+Q_{n-2};\ Q_0 = Q_1 = 2\\ q_n &= 2q_{n-1}+q_{n-2};\ q_0=q_1=1\end{align*} 
Note that the Pell and Modified Pell sequences, for $n\geq 1$, yield the denominators and numerators, respectively, of the above sequence of rational approximations.
\par Catarino et al.\ \cite{1}\cite{5}\cite{6} defined and studied generalizations of these sequences through using tools from linear algebra. These generalizations --- parameterized by $k\in\ZZ^+$ --- are known as the $k$-Pell sequence $\{P_{k,n}\}$, $k$-Pell-Lucas sequence $\{Q_{k,n}\}$, and Modified $k$-Pell sequence $\{q_{k,n}\}$. These sequences are defined in the following manner.
\begin{align}\label{int:P}P_{k,n} &= 2P_{k,n-1}+kP_{k,n-2};\ P_{k,0} = 0, P_{k,1}=1\\ \label{int:Q}Q_{k,n} &= 2Q_{k,n-1}+kQ_{k,n-2};\ Q_{k,0} = Q_{k,1} = 2\\ q_{k,n} &= 2q_{k,n-1}+kq_{k,n-2};\ q_{k,0}=q_{k,1}=1\end{align}
This paper considers a particular generalization of the Modified $k$-Pell sequence obtained by relaxing its initial conditions. This sequence, herein referred to as the Generalized $k$-Pell sequence, is defined as follows. \begin{align}\label{int:G}G_{k,n} = 2G_{k,n-1} + kG_{k,n-2},\ G_{k,0} = G_{k,1} = a\in\ZZ^+\end{align}
\par The first objective of this paper is to formulate classical identities and other properties for the Generalized k-Pell sequence; this is the focus of Section 2. The next objective is to study tridiagonal generating matrices for the k-Pell and Generalized k-Pell sequences. Specifically, this matrix approach yields closed forms for both $\{P_{k,n}\}$ and $\{G_{k,n}\}$, given in Section 3. Furthermore, in Section 4, several new identities for the k-Pell and Generalized k-Pell sequences are developed by way of considering the matrices of cofactors for their tridiagonal generating matrices.
\par A table of the first several values for $\{P_{k,n}\}$ and $\{G_{k,n}\}$ can be found in Appendix \ref{App:Table}.

\section{properties of the generalized $k$-Pell sequence}
\par In this section, note that the characteristic equation associated to the Generalized $k$-Pell sequence defined in \eqref{int:G} is $r^2-2r-k=0$, with two distinct roots $r_1 = 1+\sqrt{1+k}$ and $r_2 = 1-\sqrt{1+k}$. 
\begin{theorem}[Binet's Formula]\label{thm:binet} The $n^{th}$ Generalized $k$-Pell number is given by \[G_{k,n}= \frac{ar_1^n+ ar_2^n}{2} = \frac{a(1+\sqrt{1+k})^n+ a(1-\sqrt{1+k})^n}{2}\] where $G_{k,0}=G_{k,1}=a\in\ZZ^+$.
\end{theorem}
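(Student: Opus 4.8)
The plan is to verify Binet's formula by the standard method for linear recurrences with constant coefficients: confirm that the proposed closed form satisfies both the recurrence and the two initial conditions, which together uniquely determine the sequence. Since $r_1$ and $r_2$ are the two distinct roots of the characteristic equation $r^2 - 2r - k = 0$, any expression of the form $A r_1^n + B r_2^n$ automatically satisfies the recurrence $G_{k,n} = 2G_{k,n-1} + k G_{k,n-2}$; the entire content of the theorem is that the particular coefficients $A = B = a/2$ are forced by the initial data $G_{k,0} = G_{k,1} = a$.

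First I would set $f(n) = \frac{a}{2}(r_1^n + r_2^n)$ and check the two base cases directly. For $n = 0$ we have $f(0) = \frac{a}{2}(1 + 1) = a$, and for $n = 1$ we have $f(1) = \frac{a}{2}(r_1 + r_2)$, where $r_1 + r_2 = 2$ by Vieta's formulas (or by direct substitution of the radical expressions, since the $\sqrt{1+k}$ terms cancel), giving $f(1) = a$. Both match the stated initial conditions.

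Next I would show that $f(n)$ obeys the recurrence. Because $r_i^2 = 2r_i + k$ for $i \in \{1,2\}$, multiplying by $r_i^{n-2}$ yields $r_i^n = 2 r_i^{n-1} + k\, r_i^{n-2}$ for each root. Taking the appropriate linear combination gives $f(n) = \frac{a}{2}(r_1^n + r_2^n) = 2 \cdot \frac{a}{2}(r_1^{n-1} + r_2^{n-1}) + k \cdot \frac{a}{2}(r_1^{n-2} + r_2^{n-2}) = 2 f(n-1) + k f(n-2)$. Thus $f$ satisfies the same recurrence and the same two initial conditions as $G_{k,n}$. By a straightforward induction on $n$ (the recurrence determines every term from its two predecessors), it follows that $f(n) = G_{k,n}$ for all $n \geq 0$, which is the desired formula.

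I do not anticipate any serious obstacle here, as this is a routine uniqueness argument for second-order linear recurrences. The only point requiring minor care is the verification that $r_1 + r_2 = 2$ and $r_1 r_2 = -k$ so that the radical terms cancel correctly in the $n = 1$ base case and in confirming the characteristic relation; this is immediate from Vieta's formulas applied to $r^2 - 2r - k = 0$. The induction itself is immediate once the base cases and the recurrence-compatibility are established.
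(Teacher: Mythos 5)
Your proposal is correct and takes essentially the same approach as the paper: both rest on the distinct roots $r_1, r_2$ of the characteristic equation $r^2 - 2r - k = 0$ and on the initial conditions forcing the coefficients to equal $a/2$. The only difference is one of direction --- the paper invokes the general solution $c_1 r_1^n + c_2 r_2^n$ and solves for $c_1, c_2$, while you verify that the candidate formula satisfies the recurrence and initial data and conclude by induction --- which is a presentational, not a mathematical, distinction.
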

\begin{proof} Since the characteristic equation of $\{G_{k,n}\}$ has two roots, the closed form of $\{G_{k,n}\}$ is given by $G_{k,n} = c_1(r_1)^n+c_2(r_2)^n$. Evaluating both sides of this equation for $n=0,1$, it follows that $c_1=c_2=a/2$, which concludes the proof.
\end{proof}
The next lemma, which will be helpful in later proofs, immediately follows as a result of Theorem \ref{thm:binet}.
\begin{lemma}\label{prop:thm5}Let $G_{k,n}$ and $Q_{k,n}$ denote the $n^{th}$ terms of the Generalized $k$-Pell and $k$-Pell-Lucas sequences, respectively. Then\[G_{k,n} = \frac{aQ_{k,n}}{2}\]where $G_{k,0}=G_{k,1}=a\in\ZZ^+$.
\end{lemma}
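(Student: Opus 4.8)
The plan is to derive a Binet-type closed form for the $k$-Pell-Lucas sequence $\{Q_{k,n}\}$ and then match it against the formula for $\{G_{k,n}\}$ supplied by Theorem \ref{thm:binet}. The key observation is that $\{Q_{k,n}\}$, defined in \eqref{int:Q}, satisfies the very same recurrence $Q_{k,n} = 2Q_{k,n-1} + kQ_{k,n-2}$ as $\{G_{k,n}\}$, and therefore has the identical characteristic equation $r^2 - 2r - k = 0$ with the same distinct roots $r_1 = 1+\sqrt{1+k}$ and $r_2 = 1-\sqrt{1+k}$. Consequently its closed form is $Q_{k,n} = d_1 r_1^n + d_2 r_2^n$ for suitable constants $d_1, d_2$, exactly as in the proof of Theorem \ref{thm:binet}.

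Next I would pin down the constants by imposing the initial conditions $Q_{k,0} = Q_{k,1} = 2$. This gives the linear system $d_1 + d_2 = 2$ and $d_1 r_1 + d_2 r_2 = 2$. Using $r_1 + r_2 = 2$ together with $r_1 - r_2 = 2\sqrt{1+k} \neq 0$, I would solve to find $d_1 = d_2 = 1$, so that $Q_{k,n} = r_1^n + r_2^n$. With this in hand the lemma is immediate: multiplying by $a/2$ yields $\frac{a}{2}Q_{k,n} = \frac{ar_1^n + ar_2^n}{2}$, and the right-hand side is precisely $G_{k,n}$ by Theorem \ref{thm:binet}.

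There is no real obstacle here --- the only computation of substance is solving the $2\times 2$ system for $d_1, d_2$, which is routine. An even quicker route, worth recording, is to note that for fixed $k$ and $n$ the value $G_{k,n}$ depends linearly on the common initial value $a$ (both the recurrence and the initial data scale linearly in $a$), while $\{Q_{k,n}\}$ is exactly the $a = 2$ instance of $\{G_{k,n}\}$; hence $G_{k,n} = \frac{a}{2}\,G_{k,n}\big|_{a=2} = \frac{a}{2}Q_{k,n}$, bypassing the need to recompute any Binet constants.
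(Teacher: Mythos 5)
Your argument is correct and follows essentially the same route as the paper: both compare Binet's formula for $G_{k,n}$ (Theorem \ref{thm:binet}) with the closed form $Q_{k,n}=r_1^n+r_2^n$, the only difference being that the paper cites this latter formula from the literature while you rederive it by solving the $2\times 2$ system. Your closing observation --- that $\{Q_{k,n}\}$ is just the $a=2$ instance of $\{G_{k,n}\}$ and the whole sequence scales linearly in $a$ --- is a nice shortcut, but it does not change the substance of the proof.
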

\begin{proof} From \cite[Proposition 1]{6}, Binet's formula for the $k$-Pell-Lucas sequence defined in \eqref{int:Q} is given by $Q_{k,n} = r_1^n+r_2^n$. Combining Binet's formulae for Generalized $k$-Pell and $k$-Pell-Lucas sequences, the conclusion follows.
\end{proof}
Similarly, there is a way to connect the Generalized $k$-Pell sequence and the $k$-Pell sequence as well, which is the objective of the following lemma.
\begin{lemma}\label{prop:lem}Let $G_{k,n}$ and $P_{k,n}$ denote the $n^{\text{th}}$ terms of the Generalized $k$-Pell and $k$-Pell sequences, respectively. Then \[G_{k,n} = aP_{k,n}+akP_{k,n-1}\] where $G_{k,0}=G_{k,1} = a\in\ZZ^+$
\end{lemma}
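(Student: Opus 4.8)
The plan is to prove the identity by invoking Binet-type closed forms, paralleling the proof of Lemma \ref{prop:thm5}. The essential input is the Binet formula for the $k$-Pell sequence, available in the cited work of Catarino et al.; it reads $P_{k,n} = (r_1^n - r_2^n)/(r_1-r_2)$, where $r_1,r_2$ are the roots recorded at the start of this section and $r_1 - r_2 = 2\sqrt{1+k}$. Substituting this into the right-hand side $aP_{k,n}+akP_{k,n-1}$ and clearing the common denominator $2\sqrt{1+k}$ reduces the claim to showing that $(r_1^n - r_2^n) + k(r_1^{n-1} - r_2^{n-1})$ equals $\sqrt{1+k}\,(r_1^n + r_2^n)$, after which Theorem \ref{thm:binet} finishes the argument.

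First I would factor the target expression as $r_1^{n-1}(r_1 + k) - r_2^{n-1}(r_2 + k)$, isolating the quantities $r_i + k$. The key step is to simplify these via the characteristic equation $r^2 - 2r - k = 0$: since each root satisfies $k = r_i^2 - 2r_i$, we get $r_i + k = r_i(r_i - 1)$. Then, because $r_1 - 1 = \sqrt{1+k}$ and $r_2 - 1 = -\sqrt{1+k}$, the two terms become $r_1^n\sqrt{1+k}$ and $-r_2^n\sqrt{1+k}$, so their difference collapses to $\sqrt{1+k}\,(r_1^n + r_2^n)$, exactly as required. Dividing back by $2\sqrt{1+k}$ then yields $aP_{k,n}+akP_{k,n-1} = \tfrac{a}{2}(r_1^n + r_2^n) = G_{k,n}$ by Theorem \ref{thm:binet}.

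The main obstacle --- really the only subtle point --- is the sign bookkeeping in the middle step: the $k$-Pell Binet formula carries a \emph{difference} $r_1^n - r_2^n$, whereas the Generalized $k$-Pell formula carries a \emph{sum} $r_1^n + r_2^n$, and it is precisely the opposite signs of $r_1 - 1$ and $r_2 - 1$ that convert the former into the latter while simultaneously producing the factor $\sqrt{1+k}$ needed to cancel the denominator. Once this is seen, the computation is entirely mechanical.

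As an alternative that avoids the irrational roots, one could argue by strong induction: both $\{P_{k,n}\}$ and $\{G_{k,n}\}$ obey the recurrence $x_n = 2x_{n-1} + kx_{n-2}$, hence so does the right-hand side, and one need only verify agreement at $n=1$ and $n=2$ (a short check gives $a$ and $a(2+k)$ on both sides). I would present the Binet argument as the main proof for consistency with Lemma \ref{prop:thm5}, while noting the inductive route for readers who prefer to remain within $\ZZ$.
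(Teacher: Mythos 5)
Your argument is correct, but it takes a different route from the paper, which proves this lemma by strong induction on $n$: the base case $n=1$ gives $aP_{k,1}+akP_{k,0}=a=G_{k,1}$, and the inductive step expands $G_{k,n'}=2G_{k,n'-1}+kG_{k,n'-2}$, substitutes the hypothesis for the two preceding terms, and regroups to recover $aP_{k,n'}+akP_{k,n'-1}$ via the $k$-Pell recurrence --- essentially the ``alternative'' you sketch in your last paragraph. Your main argument instead imports the $k$-Pell Binet formula $P_{k,n}=(r_1^n-r_2^n)/(r_1-r_2)$ and reduces the claim to the identity $r_i+k=r_i(r_i-1)$ from the characteristic equation, with the opposite signs of $r_1-1=\sqrt{1+k}$ and $r_2-1=-\sqrt{1+k}$ converting the difference into the sum $r_1^n+r_2^n$; I have checked this computation and it is sound. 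The trade-off: your Binet route is uniform with the proof of Lemma \ref{prop:thm5} and makes the mechanism transparent, but it relies on a formula the paper does not formally record until Lemma \ref{cof:binet} in Section 4 (citable externally, as you note), whereas the paper's induction is self-contained and stays in $\ZZ$. One small credit to your version: you correctly observe that the inductive route needs agreement at \emph{two} consecutive indices ($n=1$ and $n=2$), since the step for $n'=2$ would otherwise invoke the formula at $n=0$, where $P_{k,-1}$ is undefined; the paper's own proof verifies only $n=1$ and is slightly less careful on this point.
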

\begin{proof}
Let $a,k\in\ZZ^+$ be arbitrary, and proceed by induction on $n$. First, let $n = 1$, so that $aP_{k,1}+kaP_{k,0} = a(1)+ka(0) = a = G_{k,1}$. Now, fix $1 < n' \in\ZZ^+$ and suppose that $G_{k,n} = aP_{k,n}+akP_{k,n-1}$ holds for all $n<n'$. Using this hypothesis and the definition of the $k$-Pell and Generalized $k$-Pell sequences, the following equalities hold. \begin{align*}
G_{k,n'} &= 2G_{k,n'-1}+kG_{k,n'-2}\\
&= 2(aP_{k,n'-1}+akP_{k,n'-2})+k(aP_{k,n'-2}+akP_{k,n'-3})\\
&= 2aP_{k,n'-1}+2akP_{k,n'-2}+akP_{k,n'-2}+ak^2P_{k,n'-3}\\
&= a(2P_{k,n'-1}+kP_{k,n'-2})+ak(2P_{k,n'-2}+kP_{k,n'-3})\\
&= aP_{k,n'}+akP_{k,n'-1}
\end{align*}
The result follows by mathematical induction.
\end{proof}
\begin{theorem}[Catalan's identity]\label{prop:cat} Let $G_{k,n}$ denote the $n^{\text{th}}$ term of the Generalized $k$-Pell sequence where $G_{k,0} = G_{k,1} = a\in\ZZ^+$. Then \[G_{k,n-r}G_{k,n+r}-G_{k,n}^2 = (-k)^{n-r}(G_{k,r}^2-a^2(-k)^r)\] for each $r\in\ZZ^+$.
\end{theorem}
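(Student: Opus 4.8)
The plan is to prove the identity directly from Binet's formula (Theorem~\ref{thm:binet}), which expresses $G_{k,m} = \tfrac{a}{2}(r_1^m + r_2^m)$ for every index $m$. The only auxiliary fact I need about the roots is Vieta's relation $r_1 r_2 = -k$, read off from the characteristic equation $r^2 - 2r - k = 0$. Everything else is algebraic manipulation of the two resulting exponential terms.

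First I would substitute Binet's formula into the left-hand side and expand both products. Writing $G_{k,n-r}G_{k,n+r}$ and $G_{k,n}^2$ in terms of $r_1$ and $r_2$, the pure powers $r_1^{2n}$ and $r_2^{2n}$ cancel upon subtraction, leaving only the cross terms:
\[G_{k,n-r}G_{k,n+r} - G_{k,n}^2 = \frac{a^2}{4}\left(r_1^{n-r}r_2^{n+r} + r_1^{n+r}r_2^{n-r} - 2 r_1^n r_2^n\right).\]
Next I would extract the common factor $(r_1 r_2)^{n-r} = (-k)^{n-r}$ from each of the three surviving terms, which reduces the bracketed quantity to $(-k)^{n-r}\left(r_1^{2r} + r_2^{2r} - 2(-k)^r\right)$. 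Then I would rewrite $r_1^{2r} + r_2^{2r} = (r_1^r + r_2^r)^2 - 2(r_1 r_2)^r = (r_1^r + r_2^r)^2 - 2(-k)^r$, so that the bracket becomes $(r_1^r + r_2^r)^2 - 4(-k)^r$. Finally, recognizing from Binet's formula that $r_1^r + r_2^r = 2G_{k,r}/a$, hence $(r_1^r + r_2^r)^2 = 4G_{k,r}^2/a^2$, I would substitute and watch the factor $a^2/4$ collapse, yielding exactly $(-k)^{n-r}\left(G_{k,r}^2 - a^2(-k)^r\right)$.

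I do not anticipate a genuine obstacle here: the argument is mechanical once Binet's formula is in hand. The only place demanding care is the bookkeeping of exponents when factoring out $(-k)^{n-r}$ --- in particular verifying that $r_1^{n-r}r_2^{n+r} = (r_1 r_2)^{n-r} r_2^{2r}$ and its companion $r_1^{n+r}r_2^{n-r} = (r_1 r_2)^{n-r} r_1^{2r}$, and that $2r_1^n r_2^n = 2(-k)^{n-r}(-k)^r$ --- together with tracking the constant $a^2/4$ through to the end. Should one prefer to avoid the surds, an alternative is induction on $n$ using the recurrence, but the Binet approach is cleaner and I would favour it.
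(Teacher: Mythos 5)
Your proof is correct and follows essentially the same route as the paper: both substitute Binet's formula into the left-hand side and reduce via $r_1r_2=-k$. The only difference is that the paper outsources the middle algebraic step to the known Catalan identity for the $k$-Pell--Lucas sequence (together with $G_{k,n}=aQ_{k,n}/2$), whereas you carry out that computation explicitly, making your argument self-contained.
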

\begin{proof} As a result of Binet's formula for the Generalized $k$-Pell and $k$-Pell-Lucas sequences, Lemma \ref{prop:thm5}, the fact that $r_1r_2 = (1+\sqrt{1+k})(1-\sqrt{1+k})=-k$, and Catalan's identity for the $k$-Pell-Lucas sequence \cite[Proposition 2]{6}, the following equalities hold. \begin{align*}
G_{k,n-r}G_{k,n+r}-G_{k,n}^2 &= \lp\frac{ar_1^{n-r}+ ar_2^{n-r}}{2}\rp\lp\frac{ar_1^{n+r}+ ar_2^{n+r}}{2}\rp - \lp\frac{ar_1^{n}+ ar_2^{n}}{2}\rp^2\\
&= \textstyle{\frac{a^2}{4}}\lc\lp r_1^{n-r}+ r_2^{n-r}\rp\lp r_1^{n+r}+ r_2^{n+r}\rp - \lp r_1^{n}+ r_2^{n}\rp^2\rc\\
&= \lp\textstyle{\frac{a^2}{4}}\rp(-k)^{n-r}(Q_{k,n}^2-4(-k)^r)\\
&= (-k)^{n-r}(G_{k,r}^2-a^2(-k)^r).
\end{align*}
\end{proof}
By letting $r=1$ and applying the initial conditions of the Generalized $k$-Pell sequence to Theorem \ref{prop:cat}, Cassini's identity for $\{G_{k,n}\}$ is obtained.
\begin{theorem}[Cassini's identity]Let $G_{k,n}$ denote the $n^{\text{th}}$ term of the Generalized $k$-Pell sequence where $G_{k,0} = G_{k,1} = a\in\ZZ^+$. Then\[G_{k,n-1}G_{k,n+1}-G_{k,n}^2 = a^2(-k)^{n-1}(1+k).\] 
\end{theorem}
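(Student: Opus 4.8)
The plan is to obtain Cassini's identity as a direct specialization of Catalan's identity (Theorem \ref{prop:cat}), exactly as the surrounding text indicates. First I would set $r = 1$ in the statement of Theorem \ref{prop:cat}. On the left this converts the general expression $G_{k,n-r}G_{k,n+r} - G_{k,n}^2$ into the Cassini form $G_{k,n-1}G_{k,n+1} - G_{k,n}^2$, and on the right it produces $(-k)^{n-1}\lp G_{k,1}^2 - a^2(-k)^1 \rp$.

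The next step is to evaluate the two $r$-dependent quantities appearing on the right. Invoking the initial condition $G_{k,1} = a$ gives $G_{k,1}^2 = a^2$, while $(-k)^1 = -k$. Substituting both yields $(-k)^{n-1}\lp a^2 - a^2(-k)\rp$. Factoring $a^2$ out of the parenthetical expression then gives $a^2\lp 1 - (-k)\rp = a^2(1+k)$, so the entire right-hand side collapses to $a^2(-k)^{n-1}(1+k)$, which is precisely the claimed identity.

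Since every step is a routine substitution into an already-established theorem, I do not anticipate a genuine obstacle. The only point requiring care is the sign bookkeeping in the factor $(-k)^r$ at $r = 1$: the simplification $1 - (-k) = 1 + k$ is what produces the final $(1+k)$ factor, and a careless treatment of this negative sign would flip it to $(1-k)$ and spoil the result. For this reason I would write out the $-a^2(-k)$ term explicitly before combining, rather than collapsing the arithmetic in a single pass.
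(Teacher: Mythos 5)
Your proposal is correct and follows exactly the paper's own route: the paper likewise obtains Cassini's identity by setting $r=1$ in Theorem \ref{prop:cat} and applying the initial condition $G_{k,1}=a$. Your explicit simplification $a^2 - a^2(-k) = a^2(1+k)$ is just the arithmetic the paper leaves implicit.
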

\begin{proof}
Let $r=1$ and apply the initial conditions of the Generalized $k$-Pell sequence to Theorem \ref{prop:cat}.
\end{proof}
The last objective of this section is to formulate d'Ocagne's identity for the Generalized $k$-Pell sequence. The proof of this identity will be quite simple with the use of Lemma \ref{prop:thm5}.
\begin{theorem}[d'Ocagne's identity]\label{prop:doc} Let $G_{k,n}$ denote the $n^{\text{th}}$ term of the Generalized $k$-Pell sequence where $G_{k,0} = G_{k,1} = a\in\ZZ^+$. If $m>n$, then \[
G_{k,m}G_{k,n+1}-G_{k,m+1}G_{k,n}= a\lp-1\rp^nk^n\sqrt{1+k}\lp G_{k,m-n}-a\lp1+\sqrt{1+k}\rp^{m-n}\rp.\] 
\end{theorem}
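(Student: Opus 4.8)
The plan is to prove the identity by substituting Binet's formula (Theorem~\ref{thm:binet}) into the left-hand side and simplifying, rather than invoking any external d'Ocagne identity. Writing $r_1 = 1+\sqrt{1+k}$ and $r_2 = 1-\sqrt{1+k}$, recall that $r_1 r_2 = -k$ and $r_1 - r_2 = 2\sqrt{1+k}$; these two relations are the only facts about the roots I expect to need. Substituting $G_{k,j} = \tfrac{a}{2}(r_1^j + r_2^j)$ for $j \in \{m,\, n+1,\, m+1,\, n\}$ into $G_{k,m}G_{k,n+1} - G_{k,m+1}G_{k,n}$ and expanding each product gives a factor of $\tfrac{a^2}{4}$ times a difference of eight terms; the two ``diagonal'' terms $r_1^{m+n+1}$ and $r_2^{m+n+1}$ occur in both products and cancel, leaving four mixed terms.

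The next step is the key algebraic step: factoring the surviving expression. Grouping the four mixed terms as $\lp r_1^m r_2^{n+1} - r_1^{m+1} r_2^n\rp + \lp r_2^m r_1^{n+1} - r_2^{m+1} r_1^n\rp$ and pulling out common factors yields $(r_1 - r_2)\lp r_2^m r_1^n - r_1^m r_2^n\rp$. Factoring $r_1^n r_2^n = (r_1 r_2)^n = (-k)^n$ out of the second factor turns this into $(r_1 - r_2)(-k)^n\lp r_2^{m-n} - r_1^{m-n}\rp$. Substituting $r_1 - r_2 = 2\sqrt{1+k}$ and $(-k)^n = (-1)^n k^n$, the left-hand side becomes $-\tfrac{a^2}{2}(-1)^n k^n \sqrt{1+k}\lp r_1^{m-n} - r_2^{m-n}\rp$.

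Finally, I would show the right-hand side collapses to the same expression. Since $(1+\sqrt{1+k})^{m-n} = r_1^{m-n}$ and $G_{k,m-n} = \tfrac{a}{2}\lp r_1^{m-n} + r_2^{m-n}\rp$ by Binet's formula, the inner factor satisfies $G_{k,m-n} - a r_1^{m-n} = \tfrac{a}{2}\lp r_2^{m-n} - r_1^{m-n}\rp$, so the right-hand side equals $a(-1)^n k^n\sqrt{1+k}\cdot\tfrac{a}{2}\lp r_2^{m-n} - r_1^{m-n}\rp = -\tfrac{a^2}{2}(-1)^n k^n \sqrt{1+k}\lp r_1^{m-n} - r_2^{m-n}\rp$, matching the left-hand side and completing the proof.

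I expect the only real obstacle to be bookkeeping --- keeping the signs straight in the factoring step and correctly matching $r_1$ with $1+\sqrt{1+k}$ (as opposed to $r_2$), since an error there flips the overall sign. The hypothesis $m > n$ plays no role in the algebra itself; it serves only to guarantee that the exponent $m-n$ is positive, so that $G_{k,m-n}$ is a genuine term of the sequence. As an alternative to the direct computation, one could instead apply Lemma~\ref{prop:thm5} to write the left-hand side as $\tfrac{a^2}{4}\lp Q_{k,m}Q_{k,n+1} - Q_{k,m+1}Q_{k,n}\rp$, reducing the claim to a d'Ocagne-type identity for the $k$-Pell-Lucas sequence; but since the Binet computation above is already self-contained and short, I would favour it.
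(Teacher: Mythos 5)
Your proof is correct, and the algebra checks out: the diagonal terms $r_1^{m+n+1}$, $r_2^{m+n+1}$ do cancel, the four mixed terms factor as $(r_1-r_2)\lp r_2^m r_1^n - r_1^m r_2^n\rp$, and both sides reduce to $\tfrac{a^2}{2}(-1)^n k^n\sqrt{1+k}\lp r_2^{m-n}-r_1^{m-n}\rp$. The difference from the paper is in how the central algebraic identity is handled: the paper also begins by substituting Binet's formula and pulling out $\tfrac{a^2}{4}$, but it then disposes of the expression $\lp r_1^m+r_2^m\rp\lp r_1^{n+1}+r_2^{n+1}\rp-\lp r_1^{m+1}+r_2^{m+1}\rp\lp r_1^n+r_2^n\rp$ in a single step by citing d'Ocagne's identity for the $k$-Pell--Lucas sequence (Proposition 4 of the Catarino--Vasco paper) together with Lemma~\ref{prop:thm5}, i.e.\ exactly the ``alternative'' you mention and set aside at the end of your proposal. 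Your version carries out the factorization explicitly and is therefore self-contained, needing only $r_1r_2=-k$ and $r_1-r_2=2\sqrt{1+k}$; the paper's version is shorter on the page but imports the entire computational content from the cited reference. Your observation that the hypothesis $m>n$ only ensures $m-n$ is a positive index (so $G_{k,m-n}$ is a genuine term) is also accurate and is not made explicit in the paper.
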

\begin{proof} Again using Binet's formula, \cite[Proposition 4]{6}, and Lemma \ref{prop:thm5}, it follows that \begin{align*}
G_{k,m}G_{k,n+1}-G_{k,m+1}G_{k,n}&=  \lp\frac{ar_1^{m}+ ar_2^{m}}{2}\rp\lp\frac{ar_1^{n+1}+ ar_2^{n+1}}{2}\rp\\&\,\,\,\,\, - \lp\frac{ar_1^{m+1}+ ar_2^{m+1}}{2}\rp\lp\frac{ar_1^{n}+ ar_2^{n}}{2}\rp\\
&= \ts{\frac{a^2}{4}}\lp r_1^{m}+ r_2^{m}\rp\lp r_1^{n+1}+ r_2^{n+1} \rp\\&\,\,\,\,\, - \ts{\frac{a^2}{4}}\lp r_1^{m+1}+ r_2^{m+1}\rp\lp r_1^{n}+ r_2^{n}\rp\\
&= \lp\ts{\frac{a^2}{4}}\rp\lp-1\rp^nk^n2\sqrt{1+k}\lp Q_{k,m-n}-2\lp1+\sqrt{1+k}\rp^{m-n}\rp\\
&= a\lp-1\rp^nk^n\sqrt{1+k}\lp G_{k,m-n}-a\lp1+\sqrt{1+k}\rp^{m-n}\rp.
\end{align*}
\end{proof}

\section{generating matrices}
\par Let $k,n\in\ZZ^+$ and define \begin{align}
\label{gen:P}P_n(k) &= \begin{bmatrix}
2 & k & 0 & 0 & \cdots & 0 & 0 & 0\\
-1 & 2 & k & 0 & \cdots & 0 & 0 & 0\\
0 & -1 & 2 & k & \cdots & 0 & 0 & 0\\
\vdots & \vdots & \vdots & \vdots & \ddots & \vdots & \vdots & \vdots\\
0 & 0 & 0 & 0 & \cdots & -1 & 2 & k\\
0 & 0 & 0 & 0 & \cdots & 0 & -1 & 2\\
\end{bmatrix}_{n\times n}\\ \label{gen:qnk}Q_n(k) &= \begin{bmatrix}
2k+4 & 2k & 0 & 0 & \cdots & 0 & 0 & 0\\
-1 & 2 & k & 0 & \cdots & 0 & 0 & 0\\
0 & -1 & 2 & k & \cdots & 0 & 0 & 0\\
\vdots & \vdots & \vdots & \vdots & \ddots & \vdots & \vdots & \vdots\\
0 & 0 & 0 & 0 & \cdots & -1 & 2 & k\\
0 & 0 & 0 & 0 & \cdots & 0 & -1 & 2\\
\end{bmatrix}_{n\times n}\\ \label{gen:unk}q_n(k) &= \begin{bmatrix}
k+2 & k & 0 & 0 & \cdots & 0 & 0 & 0\\
-1 & 2 & k & 0 & \cdots & 0 & 0 & 0\\
0 & -1 & 2 & k & \cdots & 0 & 0 & 0\\
\vdots & \vdots & \vdots & \vdots & \ddots & \vdots & \vdots & \vdots\\
0 & 0 & 0 & 0 & \cdots & -1 & 2 & k\\
0 & 0 & 0 & 0 & \cdots & 0 & -1& 2\\
\end{bmatrix}_{n\times n}\end{align}
It was shown in \cite{1} that these matrices generate the terms of the $k$-Pell, $k$-Pell-Lucas, and Modified $k$-Pell sequences, respectively, in the sense that  \begin{align*}
\la P_{n}(k)\ra &= P_{k,n+1}\\
\la Q_{n}(k)\ra &= Q_{k,n+1}\\
\la q_{n}(k)\ra &= q_{k,n+1}\end{align*}
for all $k,n\in\ZZ^+$. 
\par Using methods similar to those found in \cite{1}, one can adjust the Modified $k$-Pell generating matrix $q_n(k)$ to define a generating matrix for the Generalized $k$-Pell sequence.
\begin{theorem} The matrix \begin{align}\label{gen:G}G_n(k) = \begin{bmatrix}
ak+2a & ak & 0 & 0 & \cdots & 0 & 0 & 0\\
-1 & 2 & k & 0 & \cdots & 0 & 0 & 0\\
0 & -1 & 2 & k & \cdots & 0 & 0 & 0\\
\vdots & \vdots & \vdots & \vdots & \ddots & \vdots & \vdots & \vdots\\
0 & 0 & 0 & 0 & \cdots & -1 & 2 & k\\
0 & 0 & 0 & 0 & \cdots & 0 & -1 & 2\\
\end{bmatrix}_{n\times n}\end{align} is a generating matrix for the Generalized $k$-Pell sequence, $\{G_{k,n}\}$, with initial conditions $G_{k,0} = G_{k,1} = a\in\ZZ^+$ and $a\ne 0$. That is, $\la G_n(k)\ra = G_{k,n+1}\ \forall k,n\in\ZZ^+$.\end{theorem}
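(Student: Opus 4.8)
The plan is to leverage the fact, recalled immediately above the statement, that the pure tridiagonal matrix $P_n(k)$ of \eqref{gen:P} satisfies $\la P_n(k)\ra = P_{k,n+1}$, combined with the multilinearity of the determinant in its first row. The key observation is that $G_n(k)$ and $P_n(k)$ are identical in every row except the first, and that the first row of $G_n(k)$ decomposes as
\[(ak+2a,\ ak,\ 0,\ \dots,\ 0) = a\,(2,\ k,\ 0,\ \dots,\ 0) + (ak,\ 0,\ 0,\ \dots,\ 0),\]
that is, it is $a$ times the first row of $P_n(k)$ plus the vector $(ak,0,\dots,0)$. This turns the whole computation into an appeal to Lemma \ref{prop:lem} and avoids a full induction.

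First I would apply linearity of the determinant in the first row to obtain
\[\la G_n(k)\ra = a\,\la P_n(k)\ra + \la M\ra,\]
where $M$ denotes the matrix formed from $P_n(k)$ by replacing its first row with $(ak,0,\dots,0)$ and leaving rows $2$ through $n$ unchanged (these already coincide with those of $G_n(k)$). The first term is immediately $aP_{k,n+1}$ by the result of \cite{1}. Next I would evaluate $\la M\ra$ by cofactor expansion along its first row: since that row contains the single nonzero entry $ak$ in position $(1,1)$, the expansion collapses to $\la M\ra = ak\,\la P_{n-1}(k)\ra$, because deleting the first row and column of $M$ leaves precisely the $(n-1)\times(n-1)$ matrix $P_{n-1}(k)$. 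Invoking $\la P_{n-1}(k)\ra = P_{k,n}$ then gives $\la M\ra = akP_{k,n}$.

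Assembling the two pieces yields $\la G_n(k)\ra = aP_{k,n+1} + akP_{k,n}$, which is exactly $G_{k,n+1}$ by Lemma \ref{prop:lem} (applied with $n$ replaced by $n+1$). The computation is short, so the only real care is bookkeeping at the boundary: the collapse to $P_{n-1}(k)$ presumes $n\ge 2$, so I would dispatch $n=1$ separately by noting that $G_1(k)=[\,ak+2a\,]$ has determinant $a(k+2)=2G_{k,1}+kG_{k,0}=G_{k,2}$. I expect the main (and only) obstacle to be this identification of the relevant minors of $M$ and $G_n(k)$ with the standard generating matrices $P_{n-1}(k)$ and $P_n(k)$ — i.e.\ checking that the sub-/super-diagonal pattern of $-1$'s, $2$'s, and $k$'s survives the row-and-column deletions; once that is verified, the conclusion is a direct consequence of Lemma \ref{prop:lem}.
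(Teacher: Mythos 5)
Your argument is correct, and it takes a genuinely different route from the paper. The paper proves the theorem by strong induction on $n$: after checking $n=1$, it expands the determinant of $G_m(k)$ along the last column to obtain $\la G_m(k)\ra = 2\la G_{m-1}(k)\ra + k\la G_{m-2}(k)\ra$, which mirrors the defining recurrence of $\{G_{k,n}\}$ and closes the induction. You instead avoid any induction on the $G$ side: you split the first row as $a$ times the first row of $P_n(k)$ plus $(ak,0,\dots,0)$, use multilinearity to get $\la G_n(k)\ra = a\la P_n(k)\ra + ak\la P_{n-1}(k)\ra = aP_{k,n+1}+akP_{k,n}$, and then invoke Lemma \ref{prop:lem} to identify this with $G_{k,n+1}$. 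Your bookkeeping is sound: the minor obtained by deleting the first row and column really is $P_{n-1}(k)$, the decomposition of the first row is exact, and you correctly isolate the $n=1$ boundary case where the collapse to $P_{n-1}(k)$ is unavailable. The trade-off is that your proof outsources the recursive work to two previously established facts (the result of \cite{1} that $\la P_n(k)\ra = P_{k,n+1}$, and Lemma \ref{prop:lem}, itself proved by induction), yielding a shorter and arguably more illuminating argument that explains \emph{why} the first row of $G_n(k)$ has the form it does; the paper's induction is self-contained and runs in parallel with the method used in \cite{1} for the other generating matrices, at the cost of repeating that machinery.
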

\begin{proof} Let $a,k\in\ZZ^+$ be arbitrary and let $n = 1$, so that $G_1(k) = \begin{bmatrix} ak+2a \end{bmatrix}$. It is obvious that $\la G_1(k) \ra =  G_{k,2}$. Now, fix $1<m\in \ZZ^+$ and suppose that $\la G_{n}(k) \ra = G_{k,n+1}$ for all $n\in\ZZ^+$ such that $n < m$. $G_{m}(k)$ is of the form \[G_{m}(k) =\begin{bmatrix}
ak+2a & ak & 0 & 0 & \cdots & 0 & 0 & 0\\
-1 & 2 & k & 0 & \cdots & 0 & 0 & 0\\
0 & -1 & 2 & k & \cdots & 0 & 0 & 0\\
\vdots & \vdots & \vdots & \vdots & \ddots & \vdots & \vdots & \vdots\\
0 & 0 & 0 & 0 & \cdots & -1 & 2 & k\\
0 & 0 & 0 & 0 & \cdots & 0 & -1 & 2\\
\end{bmatrix}_{m\times m}\] Expanding along the last column of this matrix, it follows that $\la G_{m}(k)\ra = 2\la G_{m-1}(k)\ra + k \la G_{m-2}(k)\ra = 2G_{k,m} + kG_{k,m-1} = G_{k,m+1}$. Therefore, since $a,k\in\ZZ^+$ were set to be arbitrary, the result follows by induction.
\end{proof}
Consider the first few values of $\la G_n(k)\ra$, displayed below.\begin{align*}
\la G_1(k)\ra &= ak+2a\\
\la G_2(k)\ra &= \la \begin{matrix}
ak+2a & ak \\
-1 & 2 & \\
\end{matrix}\ra\\
&= 2(ak+2a) + ak\\
\la G_3(k)\ra &=\la \begin{matrix}
ak+2a& ak  & 0\\
-1 & 2 &  k\\
0 & -1 & 2 \\
\end{matrix}\ra\\
&= 2\left[2(ak+2a) + ak\right] + k(ak+2a)\\
&= 2^2(ak+2a)+2ak+k(ak+2a)
\end{align*}
Interestingly, there is a general behavior for the value of $\la G_n(k)\ra$ in terms of $n$, which is the subject of the following theorem. Assume, for sake of clarity, that ${-1\choose 0} = 0$.
\begin{theorem} Let $G_{k,n}$ denote the $n^{\text{th}}$ term of the Generalized $k$-Pell sequence with initial conditions $G_{k,0} = G_{k,1} = a\in\ZZ^+$. Then \[ G_{k,n+1}= \left\{ \begin{array}{ll}
\displaystyle{\sum\limits_{i=1}^m \sum\limits_{j=0}^1} \textstyle{{m-2+i+j \choose m-i}} a^{1-j}k^{m+1-i-j}2^{2i+j-2}(ak+2a)^j &\text{ for } n =2m, \text{ and}\\
 \displaystyle{\sum\limits_{i=1}^m \sum\limits_{j=0}^1} \textstyle{{m-3+i+j \choose m-i}} a^{1-j}k^{m+1-i-j}2^{2i+j-3}(ak+2a)^j & \text{ for } n =2m-1
\end{array}\right.\]\end{theorem}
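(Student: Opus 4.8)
The plan is to bypass a direct induction on the double sum and instead reduce the determinant to the closed form of the ordinary $k$-Pell numbers, whose single-index binomial expansion is standard. By the preceding theorem $\la G_n(k)\ra = G_{k,n+1}$, and applying Lemma \ref{prop:lem} at index $n+1$ together with the recurrence $P_{k,n+1}=2P_{k,n}+kP_{k,n-1}$ from \eqref{int:P}, I would first record
\[
G_{k,n+1} = aP_{k,n+1}+akP_{k,n} = (ak+2a)P_{k,n}+akP_{k,n-1}.
\]
(The same identity also drops out of a cofactor expansion of $\la G_n(k)\ra$ along its first row, whose only nonzero entries are $ak+2a$ and $ak$.) Everything then hinges on the $k$-Pell closed form
\[
P_{k,\ell}=\sum_{i\ge 0}\binom{\ell-1-i}{i}\,2^{\ell-1-2i}k^{i},
\]
which follows by a short induction on $\ell$ from \eqref{int:P} via Pascal's rule, or may be quoted from the $k$-Pell closed form established earlier in this section.

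With that substitution made, I would split into the two stated cases according to the parity of $n$. The mechanism producing a two-line answer is exactly that $n$ and $n-1$ have opposite parities, so one of $P_{k,n},P_{k,n-1}$ is even-indexed and the other odd-indexed. Concretely, for $n=2m$ the summand $(ak+2a)P_{k,2m}$ supplies the $j=1$ block and $akP_{k,2m-1}$ the $j=0$ block, while for $n=2m-1$ the term $(ak+2a)P_{k,2m-1}$ supplies the $j=1$ block and $akP_{k,2m-2}$ the $j=0$ block, with all exponents shifted by one. In each block I would apply the reindexing $i\mapsto m-i$, which converts $\binom{\ell-1-i}{i}$ into the displayed coefficients $\binom{m-2+i+j}{m-i}$ (resp.\ $\binom{m-3+i+j}{m-i}$) and turns $2^{\ell-1-2i}k^{i}$ into the stated powers, the factors $a$ and $ak$ being absorbed into $a^{1-j}$ and the shifted $k$-exponent $k^{m+1-i-j}$.

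The routine but delicate part, and where I expect the real work to sit, is the exponent and index bookkeeping under $i\mapsto m-i$: one must check that the power of $2$, the power of $k$, and the binomial "top minus bottom" all transform as claimed, that the $j=0$ and $j=1$ blocks receive the factors $a$ versus $ak+2a$ exactly as written, and that the range $1\le i\le m$ together with the convention $\binom{-1}{0}=0$ correctly kills the vanishing boundary terms. In particular, the stipulated convention is precisely what forces the $j=0$ block of the $n=2m-1$ formula to vanish at $m=1$, matching $akP_{k,0}=0$. I would verify the endpoints $i=1$ and $i=m$ by hand and confirm the base cases against the values $\la G_1(k)\ra,\la G_2(k)\ra,\la G_3(k)\ra$ computed above; once the index translation is pinned down, the two cases follow by direct comparison, with no further recurrence manipulation required.
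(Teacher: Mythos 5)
Your proposal is correct, and it takes a genuinely different route from the paper. The paper proves the theorem by strong induction on $m$ (with separate base cases for $n=1$ and $n=2$), feeding both branches of the formula into the recurrence $G_{k,n+1}=2G_{k,n}+kG_{k,n-1}$ and recombining the resulting double sums term by term via Pascal's rule --- a lengthy bookkeeping computation. You instead factor the statement through the identity $G_{k,n+1}=aP_{k,n+1}+akP_{k,n}=(ak+2a)P_{k,n}+akP_{k,n-1}$ (Lemma \ref{prop:lem} plus the recurrence \eqref{int:P}) and the single-sum closed form $P_{k,\ell}=\sum_{i\ge 0}\binom{\ell-1-i}{i}2^{\ell-1-2i}k^{i}$, after which the reindexing $i\mapsto m-i$ produces the two displayed branches directly; I checked the exponent and binomial bookkeeping in all four blocks and it works, including the vanishing of the $i=1$, $j=0$ term in the odd case via $\binom{m-2}{m-1}=0$ (and $\binom{-1}{0}=0$ when $m=1$, matching $akP_{k,0}=0$). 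Your route is shorter and more illuminating: it explains the two-block $j=0,1$ structure as the two contributions $(ak+2a)P_{k,n}$ and $akP_{k,n-1}$, whereas the paper's induction verifies the formula without exposing its origin. The one organizational caveat is that the $k$-Pell closed form you invoke is Theorem \ref{ten:closeP}, which the paper establishes \emph{after} this theorem, not before; since its proof is an independent induction on \eqref{int:P}, there is no circularity, but you would need to prove it first (as you indicate) or reorder the section.
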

\begin{proof}
Let $k\in\ZZ^+$ be arbitrary, and proceed by mathematical induction on $m$. First, let $m=1$, and suppose that $n=2m=2$. It follows that \begin{align*}
G_{k,3} &= 3ka+4a\\
&= \ts{{0\choose 0}}ak + \ts{{1\choose 0}}2(ak+2a)\\
&= \sum\limits_{i=1}^1 \sum\limits_{j=0}^1 \ts{{1-2+i+j \choose 1-i}} a^{1-j}k^{1+1-i-j}2^{2i+j-2}(ak+2a)^j
\end{align*} Now, assume that $n = 2m-1 = 1$, so that\begin{align*}
G_{k,2} &= ka +2a\\
&= 0+ \ts{{0\choose0}}(ak+2a)^1\\
&=\sum\limits_{i=1}^1 \sum\limits_{j=0}^1 \ts{{1-3+i+j \choose 1-i}} a^{1-j}k^{1+1-i-j}2^{2i+j-3}(ak+2a)^j
\end{align*} Now suppose that the statement holds for $m=m'\in\ZZ^+$, with the goal of showing that it also holds for $m'+1\in\ZZ^+$. To this end, let $m = m'+1$, and first consider $n=2m-1 =2m'+1$. It follows that \begin{align*}
G_{k,2m'+1} &= 2G_{k,2m'}+kG_{k,2m'-1}\\
&= 2\sum\limits_{i=1}^{m'} \sum\limits_{j=0}^1 \ts{{m'-2+i+j \choose m'-i}} a^{1-j}k^{m'+1-i-j}2^{2i+j-2}(ak+2a)^j\\&\,\,\,\,\, + k\sum\limits_{i=1}^{m'} \sum\limits_{j=0}^1 \ts{{m'-3+i+j \choose m'-i}} a^{1-j}k^{m'+1-i-j}2^{2i+j-3}(ak+2a)^j\\
&= \sum\limits_{i=1}^{m'} \sum\limits_{j=0}^1 \ts{{m'-2+i+j \choose m'-i}} a^{1-j}k^{m'+1-i-j}2^{2i+j-1}(ak+2a)^j\\&\,\,\,\,\, + \sum\limits_{i=1}^{m'} \sum\limits_{j=0}^1 \ts{{m'-3+i+j \choose m'-i}} a^{1-j}k^{m'+2-i-j}2^{2i+j-3}(ak+2a)^j\\
&=\ts{{m'-1\choose m'-1}}ak^{m'}2+\ts{{m'\choose m'-1}}k^{m'-1}2^2(ak+2a)+\cdots+\ts{{2m'-3}\choose{1}}ak^{2}2^{2m'-3}\\&\,\,\,\,\, +\ts{{2m'-2}\choose{1}}k2^{2m'-2}(ak+2a)+ \ts{{2m'-2}\choose{0}}ak2^{2m'-1} + \ts{{2m'-1}\choose{0}}2^{2m'}(ak+2a) \\&\,\,\,\,\, + \ts{{m'-1}\choose{m'-1}}k^{m'}(ak+2a) +\ts{{m'-1\choose m'-2}}ak^{m'}2+\ts{{m'\choose m'-2}}k^{m'-1}2^2(ak+2a) + \cdots \\ &\,\,\,\,\, + \ts{{2m'-3}\choose{0}}ak^22^{2m'-3} + \ts{{2m'-2}\choose{0}}k2^{2m'-2}(ak+2a)\\
&= \ts{{m'}\choose{m'}}k^{m'}(ak+2a) + \ts{{m'\choose m'-1}}ak^{m'}2+\ts{{m'+1\choose m'-1}}k^{m'-1}2^2(ak+2a)+\cdots
\\&\,\,\,\,\, +\ts{{2m'-2}\choose{1}}ak^{2}2^{2m'-3} +\ts{{2m'-1}\choose{1}}k2^{2m'-2}(ak+2a)+ \ts{{2m'-1}\choose{0}}ak2^{2m'-1} \\&\,\,\,\,\, +\ts{{2m'}\choose{0}}2^{2m'}(ak+2a)\\
&= \sum\limits_{i=1}^{m'+1} \sum\limits_{j=0}^1 \textstyle{{m'-2+i+j \choose m'-i+1}} a^{1-j}k^{m'+2-i-j}2^{2i+j-3}(ak+2a)^j.
\end{align*}
Now, consider $n = 2m = 2m'+2$. Thus,\begin{align*}
G_{k,2m'+2} &= 2G_{k,2m'+1}+kG_{k,2m'} \\
&= 2\sum\limits_{i=1}^{m'+1} \sum\limits_{j=0}^1 \textstyle{{m'-2+i+j \choose m'-i+1}} a^{1-j}k^{m'+2-i-j}2^{2i+j-3}(ak+2a)^j\\
&\,\,\,\,\, +k\sum\limits_{i=1}^{m'} \sum\limits_{j=0}^1 \textstyle{{m'-2+i+j \choose m'-i}} a^{1-j}k^{m'+1-i-j}2^{2i+j-2}(ak+2a)^j \\
&= \sum\limits_{i=m'+1}^{m'+1} \sum\limits_{j=0}^1 \textstyle{{m'-2+i+j \choose m'-i+1}} a^{1-j}k^{m'+2-i-j}2^{2i+j-2}(ak+2a)^j \\ &\,\,\,\,\, +\sum\limits_{i=1}^{m'} \sum\limits_{j=0}^1 \textstyle{{m'-2+i+j \choose m'-i}} a^{1-j}k^{m'+2-i-j}2^{2i+j-2}(ak+2a)^j \\
&\,\,\,\,\, +\sum\limits_{i=1}^{m'} \sum\limits_{j=0}^1 \textstyle{{m'-2+i+j \choose m'-i+1}} a^{1-j}k^{m'+2-i-j}2^{2i+j-2}(ak+2a)^j\\
&= \sum\limits_{i=1}^{m'} \sum\limits_{j=0}^1 \textstyle{{m'-1+i+j \choose m'-i+1}} a^{1-j}k^{m'+2-i-j}2^{2i+j-2}(ak+2a)^j \\ &\,\,\,\,\, + \sum\limits_{i=m'+1}^{m'+1} \sum\limits_{j=0}^1 \textstyle{{m'-1+i+j \choose m'-i+1}} a^{1-j}k^{m'+2-i-j}2^{2i+j-2}(ak+2a)^j \\
&= \sum\limits_{i=1}^{m'+1} \sum\limits_{j=0}^1 \textstyle{{m'-1+i+j \choose m'-i+1}} a^{1-j}k^{m'+2-i-j}2^{2i+j-2}(ak+2a)^j.
\end{align*}
The result follows by mathematical induction on $m$.
\end{proof}
Now consider the first few values of $\la P_n(k)\ra$ in terms of $n$, displayed below. 
\begin{align*}
\la P_1(k)\ra &= 2\\
\la P_2(k)\ra &= \la \begin{matrix}
2 & k\\
-1 & 2
\end{matrix} \ra\\
&= 4+k\\
\la P_3 \ra &= \la \begin{matrix}
2 &k & 0\\
-1 & 2 & k\\
0 & -1 & 2
\end{matrix}\ra\\
&= 2(4+k) + k(2)\\
&= 4k + 8
\end{align*}
The following theorem provides a formula for the determinant of $P_n(k)$.
\begin{theorem}\label{ten:closeP}  Let $P_{k,n}$ denote the $n^{\text{th}}$ term of the $k$-Pell sequence with initial conditions $P_{k,0} =0, P_{k,1} = 1$. If $n\geq 2$, then \[ P_{k,n+1}= \sum\limits_{i=0}^{\lfloor\frac{n}{2}\rfloor}  {n-i \choose i} k^i2^{n-2i} \]
\end{theorem}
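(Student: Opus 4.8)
The plan is to prove the stated closed form by induction on $n$, showing that the right-hand side satisfies the same recurrence and the same initial data as $P_{k,n+1}$. Write $S_n = \sum_{i=0}^{\lfloor n/2\rfloor}\binom{n-i}{i}k^i 2^{n-2i}$ for the proposed sum. A direct computation gives $S_0 = 1 = P_{k,1}$, $S_1 = 2 = P_{k,2}$, and $S_2 = 4 + k = P_{k,3}$, which settles the base cases (in particular the case $n=2$ required by the statement). It then suffices to verify the $k$-Pell recurrence $S_n = 2S_{n-1} + kS_{n-2}$ for the sums themselves; once this is known, strong induction together with the equality of initial terms forces $S_n = P_{k,n+1}$ for all $n\geq 2$.

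To establish $S_n = 2S_{n-1} + kS_{n-2}$, I would expand the two terms on the right. Absorbing the factors of $2$ and $k$ into the summands gives $2S_{n-1} = \sum_{i}\binom{n-1-i}{i}k^i 2^{n-2i}$, while reindexing $kS_{n-2}$ by $j = i+1$ turns it into $\sum_{j}\binom{n-1-j}{j-1}k^{j}2^{n-2j}$. Collecting the coefficient of the monomial $k^i 2^{n-2i}$ from the two sums yields $\binom{n-1-i}{i} + \binom{n-1-i}{i-1}$, which by Pascal's rule equals $\binom{n-i}{i}$ — exactly the coefficient appearing in $S_n$. Summing over $i$ reproduces $S_n$, as desired.

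The main obstacle is not the Pascal step but the bookkeeping of the summation ranges, since the upper limits $\lfloor n/2\rfloor$, $\lfloor (n-1)/2\rfloor$, and $\lfloor (n-2)/2\rfloor$ do not line up and behave differently according to the parity of $n$. I would handle this by extending all three sums to a common index range using the standard convention that $\binom{p}{q} = 0$ whenever $q<0$ or $q>p$ (consistent with the paper's stipulation that $\binom{-1}{0}=0$), so that the spurious terms vanish automatically and the Pascal identity can be applied termwise without case distinctions. If one prefers to avoid invoking the convention, the same argument goes through by splitting into the cases $n=2m$ and $n=2m+1$ and checking the two boundary indices ($i=0$ and the top index) by hand, where in each case exactly one of the two contributing binomial coefficients is already zero.

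Finally, I note an alternative that sidesteps induction entirely: since the $k$-Pell recurrence gives $\sum_{n\geq 0} P_{k,n+1}x^n = (1 - 2x - kx^2)^{-1}$, expanding $(1-(2x+kx^2))^{-1} = \sum_{m\geq 0}(2x+kx^2)^m$ by the binomial theorem and reading off the coefficient of $x^n$ (through the substitution $n = m+i$) produces $S_n$ directly. Either route proves the identity; the inductive version is the one consistent with the methods used elsewhere in this paper.
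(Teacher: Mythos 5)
Your proof is correct and follows essentially the same route as the paper's: strong induction on $n$ via the recurrence $P_{k,n+1}=2P_{k,n}+kP_{k,n-1}$, reindexing the two sums, and combining coefficients with Pascal's rule. Your use of the convention $\binom{p}{q}=0$ for $q<0$ or $q>p$ just streamlines the parity bookkeeping that the paper handles by writing out the even case term by term and asserting the odd case ``follows similarly.''
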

\begin{proof} Let $k\in\ZZ^+$ be arbitrary, and proceed by induction on $n$. If $n = 2$, then $\ds P_{k,3}= k+4 =  \sum\limits_{i=0}^1  \ts{{2-i \choose i}} k^i2^{2-2i}$. Fix $2<n'\in\ZZ^+$ and assume that the statement holds for all $n<n'$. First, suppose $n'$ is even, so that $n' = 2m$ for some $m\in\ZZ^+$. By the induction hypothesis and the definition of $\{P_{k,n}\}$ from \eqref{int:P}, the following equalities hold. \begin{align*}P_{k,n'} &= 2P_{k,n'-1}+kP_{k,n'-2}\\
&= 2\lp \sum\limits_{i=0}^{m-1}  \ts{{n'-1-i \choose i}} k^i2^{n'-1-2i}\rp + k\lp \sum\limits_{i=0}^{m-1}  \ts{{n'-2-i \choose i}} k^i2^{n'-2-2i}\rp \\
&=  \sum\limits_{i=0}^{m-1}  \ts{{n'-1-i \choose i}} k^i2^{n'-2i}+ \sum\limits_{i=0}^{m-1}  \ts{{n'-2-i \choose i}} k^{i+1}2^{n'-2-2i} \\
&= \ts{{n'-1 \choose 0}} k^02^{n'} +  \ts{{n'-2 \choose 1}} k^12^{n'-2} +  \ts{{n'-3 \choose 2}} k^22^{n'-4}+\cdots \ts{{n'-m \choose m-1}}k^{m-1}2^{n'-2m+2}\\&\,\,\,\,\,+\ts{{n'-2 \choose 0}} k^{1}2^{n'-2}+ \ts{{n'-3 \choose 1}} k^{2}2^{n'-4}+\cdots + \ts{{n'-m \choose m-2}} k^{m-1}2^{n'-2m+2} \\&\,\,\,\,\,+  \ts{{n'-m-1 \choose m-1}} k^{m}2^{n'-2m}\\
&= \ts{{n' \choose 0}} k^02^{n'}+  \ts{{n'-1 \choose 1}} k^12^{n'-2} +  \ts{{n'-2 \choose 2}} k^22^{n'-4}+\cdots \ts{{n'-m+1 \choose m-1}}k^{m-1}2^{n'-2m+2}\\&\,\,\,\,\,+  \ts{{n'-m \choose m}} k^{m}2^{n'-2m}\\
&= \sum\limits_{i=0}^m  \ts{{n'-i \choose i}} k^i2^{n'-2i}
\end{align*} The case where $n$ is odd follows similarly, and the result is obtained.
\end{proof}
The last closed form for $\{P_{k,n}\}$ in this paper is a result of considering the eigenvalues of $P_n(k)$, defined in \eqref{gen:P}. The following theorem, the proof of which can be found in \cite{3}, will provide a necessary tool for calculating these eigenvalues.
\begin{theorem}[\cite{3}]\label{eig:thm} Let $T_n$ be an $n\times n$ tridiagonal matrix of the form \[ T_n = \begin{bmatrix}
a & b & 0 & 0 & \cdots & 0 & 0 & 0\\
c & a & b & 0 & \cdots & 0 & 0 & 0\\
0 & c & a & b & \cdots & 0 & 0 & 0\\
\vdots & \vdots & \vdots & \vdots & \ddots & \vdots & \vdots & \vdots\\
0 & 0 & 0 & 0 & \cdots & c & a & b\\
0 & 0 & 0 & 0 & \cdots & 0 & c & a\\
\end{bmatrix}_{n\times n}\]
Then $T_n$ has eigenvalues $\ds\lambda_r = a + 2\sqrt{bc}\cos{\lp \frac{r\pi}{n+1}\rp} \text{ for } r = 1,2,\dots,n$.
\end{theorem}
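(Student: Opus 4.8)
The plan is to diagonalize $T_n$ by first stripping off the constant main diagonal: write $T_n = aI + M$, where $M$ is the tridiagonal matrix with zero main diagonal, super-diagonal entries $b$, and sub-diagonal entries $c$. Since $aI$ commutes with everything and merely shifts the spectrum, it suffices to show that $M$ has eigenvalues $\mu_r = 2\sqrt{bc}\cos(r\pi/(n+1))$ for $r = 1,\dots,n$; the claimed formula for $T_n$ then follows by adding $a$.

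Next I would analyze the eigenvector equation $Mv = \mu v$ componentwise. Writing $v = (v_1,\dots,v_n)^{\top}$, the interior rows give the second-order linear recurrence $c\,v_{j-1} + b\,v_{j+1} = \mu\,v_j$, while the first and last rows are precisely these same relations under the convention $v_0 = v_{n+1} = 0$. Thus finding eigenpairs reduces to solving a constant-coefficient recurrence subject to these two boundary conditions.

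The recurrence $b\,v_{j+1} - \mu\,v_j + c\,v_{j-1} = 0$ has characteristic equation $b t^2 - \mu t + c = 0$ with roots $t_1, t_2$ satisfying $t_1 t_2 = c/b$ and $t_1 + t_2 = \mu/b$. The key idea is to encode the product relation through the substitution $t_{1,2} = \sqrt{c/b}\,e^{\pm i\theta}$, which forces $\mu = b(t_1 + t_2) = 2\sqrt{bc}\cos\theta$. Imposing $v_0 = 0$ gives the general solution $v_j = A(t_1^j - t_2^j)$, and imposing $v_{n+1} = 0$ then forces $t_1^{n+1} = t_2^{n+1}$, i.e. $e^{2 i (n+1)\theta} = 1$. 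Hence $\theta = r\pi/(n+1)$, and as $r$ ranges over $1,\dots,n$ one obtains $n$ admissible values of $\theta$, each producing a nonzero eigenvector and the eigenvalue $\mu_r = 2\sqrt{bc}\cos(r\pi/(n+1))$.

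The step I expect to require the most care is the degenerate case in which the characteristic roots coincide, i.e. the discriminant $\mu^2 - 4bc$ vanishes (equivalently $t_1 = t_2$, so the parametrization $e^{\pm i\theta}$ collapses at $\theta \in \{0,\pi\}$). There the general solution of the recurrence is $v_j = (A + Bj)t^j$ rather than a combination of two distinct geometric terms, and one must check directly that the boundary conditions $v_0 = v_{n+1} = 0$ admit only the trivial solution, so that these values do not contribute eigenvalues. Having excluded them, the $n$ values $\theta = r\pi/(n+1)$, $r = 1,\dots,n$, lie strictly between $0$ and $\pi$, yield distinct cosines, and therefore account for all $n$ eigenvalues of $M$, completing the argument. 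I would also remark that the formula remains valid when $bc < 0$ (as in the application to $P_n(k)$, where $b = k$ and $c = -1$), with $\sqrt{bc}$ interpreted as a possibly complex square root, so that the eigenvalues may be complex.
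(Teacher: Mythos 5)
Your argument is correct, but it is worth pointing out that the paper itself offers no proof of this statement: the theorem is imported from \cite{3} and used as a black box (``the proof of which can be found in \cite{3}''), so there is no in-paper argument to match. What you have written is the standard self-contained derivation for tridiagonal Toeplitz matrices --- split off $aI$, reduce the eigenvector equation to the three-term recurrence with boundary conditions $v_0=v_{n+1}=0$, parametrize the characteristic roots as $\sqrt{c/b}\,e^{\pm i\theta}$, and read off $\theta=r\pi/(n+1)$ --- and your two flagged points of care are exactly the right ones: the degenerate case $t_1=t_2$ must be discarded via the $(A+Bj)t^j$ analysis, and the $n$ distinct values of $\cos\theta$ on $(0,\pi)$ are what certify that the list of eigenvalues is exhaustive. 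The only hypothesis you use implicitly is $bc\neq 0$, which is needed both for the substitution $t_{1,2}=\sqrt{c/b}\,e^{\pm i\theta}$ and for the distinctness count; when $bc=0$ the matrix is triangular and the formula still returns the correct $n$-fold eigenvalue $a$, but the eigenvector construction does not apply verbatim, so a one-line remark disposing of that case would make the proof airtight as a statement about arbitrary $a,b,c$. Your closing observation about complex $\sqrt{bc}$ is precisely what is needed for the paper's application in Theorem \ref{eig:eig}, where $b=k$ and $c=-1$.
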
 Applying Theorem \ref{eig:thm} to the matrix $P_n(k)$ gives rise to the following theorem.
\begin{theorem}\label{eig:eig} Let $P_{k,n}$ denote the $n^{\text{th}}$ term of the $k$-Pell sequence. Then \[ P_{k,n+1}=\prod\limits_{r=1}^n \lp 2+i\sqrt{k}\cos{\lp \frac{r\pi}{n+1}\rp} \rp  \]
\end{theorem}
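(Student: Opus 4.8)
The plan is to invoke the standard fact from linear algebra that the determinant of a square matrix equals the product of its eigenvalues, counted with algebraic multiplicity. Since it has already been shown that $\la P_n(k)\ra = P_{k,n+1}$, the task reduces to determining the full spectrum of $P_n(k)$ and multiplying the eigenvalues together. The matrix $P_n(k)$ from \eqref{gen:P} is exactly a constant-entry tridiagonal matrix of the type treated in Theorem \ref{eig:thm}, so that theorem supplies the eigenvalues outright.

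First I would read off the relevant constants from \eqref{gen:P}, namely $a = 2$ on the diagonal, $b = k$ on the superdiagonal, and $c = -1$ on the subdiagonal. Applying Theorem \ref{eig:thm} then gives the eigenvalues $\ds\lambda_r = 2 + 2\sqrt{bc}\,\cos\lp\frac{r\pi}{n+1}\rp = 2 + 2\sqrt{-k}\,\cos\lp\frac{r\pi}{n+1}\rp$ for $r = 1,\dots,n$; fixing the principal branch of the square root, $\sqrt{-k} = i\sqrt{k}$, so $\lambda_r = 2 + 2i\sqrt{k}\cos\lp\frac{r\pi}{n+1}\rp$. Taking the product of these $n$ eigenvalues and invoking $\la P_n(k)\ra = P_{k,n+1}$ then produces the desired closed form $P_{k,n+1} = \prod_{r=1}^n \lambda_r$.

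Before forming the product, I would verify that Theorem \ref{eig:thm} really provides the entire spectrum. As $r$ runs through $1,\dots,n$, the angles $r\pi/(n+1)$ lie in the open interval $(0,\pi)$, where cosine is strictly decreasing and therefore injective; hence the values $\lambda_1,\dots,\lambda_n$ are pairwise distinct. An $n\times n$ matrix possessing $n$ distinct eigenvalues has each of algebraic multiplicity one, so these exhaust the characteristic roots and their product is genuinely the determinant.

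The step I expect to require the most care is the bookkeeping with the complex square root. Because $bc = -k < 0$, the individual factors $\lambda_r$ are non-real, yet their product must recover the positive integer $P_{k,n+1}$. I would confirm this by pairing the factor indexed by $r$ with the one indexed by $n+1-r$: since $\cos\lp\frac{(n+1-r)\pi}{n+1}\rp = -\cos\lp\frac{r\pi}{n+1}\rp$, these two factors are complex conjugates, so each pair contributes the real, positive quantity $4 + 4k\cos^2\lp\frac{r\pi}{n+1}\rp$, and the self-paired middle factor occurring for odd $n$ (at $r = \tfrac{n+1}{2}$, where the cosine vanishes) contributes $2$. This pairing shows the product is independent of the chosen branch of $\sqrt{-k}$ and serves as a consistency check that it equals the real value $P_{k,n+1}$.
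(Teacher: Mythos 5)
Your route is exactly the paper's: apply Theorem \ref{eig:thm} to the tridiagonal Toeplitz matrix $P_n(k)$ and use that the determinant $\la P_n(k)\ra = P_{k,n+1}$ is the product of the eigenvalues. You are in fact more careful than the paper on two points it leaves implicit, namely that the $n$ eigenvalues are pairwise distinct (so they exhaust the spectrum with multiplicity one) and that the conjugate pairing forces the product to be real and branch-independent.

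There is, however, one genuine problem, and it sits in your final step. From Theorem \ref{eig:thm} with $b=k$, $c=-1$ you correctly obtain $\lambda_r = 2 + 2\sqrt{bc}\cos\left(\frac{r\pi}{n+1}\right) = 2 + 2i\sqrt{k}\cos\left(\frac{r\pi}{n+1}\right)$, with a coefficient $2i\sqrt{k}$. You then assert that the product of these $\lambda_r$ ``produces the desired closed form,'' but the stated formula has coefficient $i\sqrt{k}$, not $2i\sqrt{k}$; these are not the same identity. A check at $n=2$ settles which one is true: $P_{k,3} = k+4$, and indeed $\left(2+2i\sqrt{k}\cos\frac{\pi}{3}\right)\left(2+2i\sqrt{k}\cos\frac{2\pi}{3}\right) = (2+i\sqrt{k})(2-i\sqrt{k}) = 4+k$, whereas the product with $i\sqrt{k}$ gives $4+\frac{k}{4}$. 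So your eigenvalue computation is the correct one and the displayed statement (which the paper's own proof reaches by silently dropping the factor of $2$ from $2\sqrt{bc}$) is off by a factor of $2$ inside each cosine term. The fix is not to your argument but to the statement: prove $P_{k,n+1}=\prod_{r=1}^{n}\left(2+2i\sqrt{k}\cos\left(\frac{r\pi}{n+1}\right)\right)$, and do not absorb the discrepancy in the last line.
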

\begin{proof} Consider the matrix $P_n(k)$ defined \eqref{gen:P}. Note that $P_n(k)$ is a tridiagonal Toeplitz matrix. Thus, it follows from Theorem \ref{eig:thm} that the eigenvalues of $P_n(k)$ are $\ds \lambda _r = 2+i\sqrt{k}\cos{\lp \frac{r\pi}{n+1}\rp} \text{ for } r = 1,2,\dots,n$. Since $\la P_n(k)\ra = P_{k,n+1}$, the conclusion follows.
\end{proof}
\par Similarly to what was noted in \cite[Section 4.1]{2}, if $n$ is odd, then the matrix $P_n(k)$ has exactly one real eigenvalue, $\lambda_r = 2$, where $r = \frac{n+1}{2}$. If $n$ is even, $P_n(k)$ has no real eigenvalues. Regardless, the product in Theorem \ref{eig:eig} will always be an integer, an interesting fact due to the ``complexity'' of the statement.

\section{matrix of cofactors}
\begin{theorem}[Usmani's Formula, \cite{4}] Consider the nonsingular, tridiagonal matrix $U_n$ defined by \[U_n = \begin{bmatrix}
a_1 & b_1 & 0 & 0 & \cdots & 0 & 0 & 0\\
c_1 & a_2 & b_2 & 0 & \cdots & 0 & 0 & 0\\
0 & c_2 & a_3 & b_3 & \cdots & 0 & 0 & 0\\
\vdots & \vdots & \vdots & \vdots & \ddots & \vdots & \vdots & \vdots\\
0 & 0 & 0 & 0 & \cdots & c_{n-2} & a_{n-1} & b_{n-1}\\
0 & 0 & 0 & 0 & \cdots & 0 & c_{n-1} & b_n\\
\end{bmatrix}\]
where $a_i,b_i,c_i\in\RR$. Then $U_n^{-1}$ is given by \[[U_n^{-1}]_{i,j} = \left\{ \begin{array}{ll}
         (-1)^{i+j}\frac{1}{\theta_n}b_i\cdots b_{j-1}\theta_{i-1}\phi_{j+1} &\text{if } i < j,\\
                  \frac{1}{\theta_n}\theta_{i-1}\phi_{i+1} &\text{if } i=j, \text{ and}\\
         (-1)^{i+j}\frac{1}{\theta_n}c_j\cdots c_{i-1}\theta_{j-1}\phi_{i+1} & \text{if } i > j\end{array} \right. \]
where \begin{align*}
\theta_i &= a_i\theta_{i-1}-b_{i-1}c_{i-1}\theta_{i-2};\quad \theta_0 = 1, \theta_1 = a_1 \\
\phi_i &= a_i\phi_{i+1}-b_{i}c_{i}\phi_{i+2};\quad \phi_{n+1} = 1, \phi_n = a_n
\end{align*}
for all $1\leq i \leq n$.
\end{theorem}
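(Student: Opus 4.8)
The plan is to prove the formula by computing the entries of the adjugate of $U_n$ and invoking $U_n^{-1} = \tfrac{1}{\det U_n}\operatorname{adj}(U_n)$, so that $[U_n^{-1}]_{i,j} = \tfrac{(-1)^{i+j}}{\det U_n}\,M_{j,i}$, where $M_{j,i}$ is the minor obtained by deleting row $j$ and column $i$ from $U_n$. The first step is to identify the auxiliary sequences: Laplace-expanding a tridiagonal determinant along its last row gives precisely the recurrence $\theta_i = a_i\theta_{i-1}-b_{i-1}c_{i-1}\theta_{i-2}$, so $\theta_i$ is the determinant of the leading $i\times i$ principal submatrix, and symmetrically $\phi_i$ is the determinant of the trailing submatrix on indices $i,\dots,n$. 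In particular $\theta_n=\phi_1=\det U_n$, which is nonzero by hypothesis, so the inverse exists and the scalar $1/\theta_n$ in the claim is legitimate.

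The heart of the argument is the evaluation of $M_{j,i}$, split into the cases $i=j$, $i<j$, and $i>j$. For $i=j$, deleting row $i$ and column $i$ severs every tridiagonal link between $\{1,\dots,i-1\}$ and $\{i+1,\dots,n\}$ (the entries $b_{i-1},c_{i-1},b_i,c_i$ all lie in the deleted row or column), so the surviving matrix is block diagonal and $M_{i,i}=\theta_{i-1}\phi_{i+1}$, yielding the diagonal entry at once. For $i<j$, I would exploit the banded structure to exhibit a block lower-triangular decomposition of $M_{j,i}$: rows $1,\dots,i-1$ are supported on columns $\le i$, so after removing column $i$ they occupy only columns $1,\dots,i-1$, peeling off a leading factor $\theta_{i-1}$; symmetrically rows $j+1,\dots,n$ decouple into a trailing factor $\phi_{j+1}$; and the remaining middle block, indexed by rows $i,\dots,j-1$ against columns $i+1,\dots,j$, becomes, after the unit index shift between its rows and columns, lower triangular with diagonal entries $b_i,b_{i+1},\dots,b_{j-1}$. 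Multiplying the three determinants gives $M_{j,i}=\theta_{i-1}\,(b_i\cdots b_{j-1})\,\phi_{j+1}$, which is exactly the $i<j$ case once the sign $(-1)^{i+j}$ is restored.

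The case $i>j$ I would deduce from the one just settled by transposition: since $M_{j,i}(U_n)=M_{i,j}(U_n^{T})$ and $U_n^{T}$ is again tridiagonal with its super- and sub-diagonals $b_p,c_p$ interchanged while the products $b_pc_p$—and hence all the $\theta_i$ and $\phi_i$—are unchanged, applying the $i<j$ result to $U_n^{T}$ produces $M_{j,i}=\theta_{j-1}\,(c_j\cdots c_{i-1})\,\phi_{i+1}$ and the stated entry. The main obstacle I expect is the middle-block computation in the $i<j$ case: one must track the index bookkeeping carefully to confirm that the shift by one between the retained rows $i,\dots,j-1$ and retained columns $i+1,\dots,j$ places exactly the superdiagonal entries $b_i,\dots,b_{j-1}$ on the diagonal while pushing $a_p$ and $c_{p-1}$ strictly below it, so the block is genuinely triangular and its determinant is the product of the $b$'s. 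The degenerate sub-cases $i=1$ and $j=n$, in which the leading or trailing block is empty, should be checked against the boundary values $\theta_0=1$ and $\phi_{n+1}=1$; note that this cofactor approach never requires the intermediate minors $\theta_i$ to be nonzero, which sidesteps the usual degeneracy concerns attached to Usmani's formula.
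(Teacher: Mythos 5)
The paper does not actually prove this theorem: it is imported verbatim from the cited reference \cite{4} and used as a black box, so there is no in-paper argument to compare yours against. Your proof is, however, correct and complete, and it is essentially the standard derivation of Usmani's formula: identifying $\theta_i$ and $\phi_i$ as the leading and trailing principal minors via Laplace expansion, computing the cofactors through the block lower-triangular structure of the minor $M_{j,i}$ (with the middle block's unit index shift putting $b_i,\dots,b_{j-1}$ on the diagonal), and obtaining the $i>j$ case by transposition, which preserves the products $b_pc_p$ and hence both recurrences. The boundary conventions $\theta_0=\phi_{n+1}=1$ are handled, and your observation that only $\theta_n\neq 0$ is needed is accurate. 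One cosmetic point worth flagging: the statement as printed in the paper has a typo in the matrix (the $(n,n)$ entry should be $a_n$, not $b_n$, consistent with $\phi_n=a_n$); your proof implicitly uses the corrected matrix, which is the right reading.
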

\par Applying Usmani's Formula to $P_n(k)$ and $G_n(k)$ yields the Theorems \ref{cof:usmaniP} and \ref{cof:usmaniG}. 
\begin{theorem}\label{cof:usmaniP} Let $P_n(k)$ be the matrix defined in \eqref{gen:P} and let $P_{k,n}$ denote the $n^{\text{th}}$ term in the $k$-Pell sequence. Then $(P_n(k))^{-1}$ is given by \[[(P_n(k))^{-1}]_{i,j} = \left\{ \begin{array}{ll}
         (-1)^{i+j}\frac{1}{P_{k,n+1}}k^{j-i}P_{k,i}P_{k,n-j+1} &\text{if } i \leq j,\text{ and}\\
         \frac{1}{P_{k,n+1}}P_{k,j}P_{k,n-i+1} & \text{if } i > j.\end{array} \right. \]
\end{theorem}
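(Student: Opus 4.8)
The plan is to apply Usmani's Formula directly to $P_n(k)$, which already has the required tridiagonal form with constant entries $a_i = 2$, $b_i = k$, and $c_i = -1$ for all $i$. Before invoking the formula I would record that $P_n(k)$ is nonsingular, since its determinant equals $P_{k,n+1}$ (a positive integer by Theorem \ref{ten:closeP}), so the hypothesis of Usmani's Formula is satisfied.

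The key step is to identify the auxiliary sequences $\theta_i$ and $\phi_i$ with shifted and reflected copies of the $k$-Pell sequence. Substituting the constant entries, the $\theta$ recurrence becomes $\theta_i = 2\theta_{i-1} - (k)(-1)\theta_{i-2} = 2\theta_{i-1} + k\theta_{i-2}$ with $\theta_0 = 1$, $\theta_1 = 2$. This is precisely the $k$-Pell recurrence \eqref{int:P}, and since $P_{k,1} = 1 = \theta_0$ and $P_{k,2} = 2 = \theta_1$, a routine induction gives $\theta_i = P_{k,i+1}$; in particular $\theta_n = P_{k,n+1}$, $\theta_{i-1} = P_{k,i}$, and $\theta_{j-1} = P_{k,j}$. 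For $\phi$, the recurrence becomes $\phi_i = 2\phi_{i+1} + k\phi_{i+2}$ with $\phi_{n+1} = 1$, $\phi_n = 2$. Reindexing by $\psi_m := \phi_{n+1-m}$ converts this backward recurrence into the forward $k$-Pell recurrence $\psi_m = 2\psi_{m-1} + k\psi_{m-2}$ with $\psi_0 = 1$, $\psi_1 = 2$, whence $\psi_m = P_{k,m+1}$ and therefore $\phi_i = P_{k,n-i+2}$; in particular $\phi_{j+1} = P_{k,n-j+1}$ and $\phi_{i+1} = P_{k,n-i+1}$.

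The final step is to substitute these identifications into the three cases of Usmani's Formula and simplify the constant products. For $i < j$, the factor $b_i \cdots b_{j-1}$ is a product of $j-i$ copies of $k$, giving $k^{j-i}$, so the entry becomes $(-1)^{i+j}\tfrac{1}{P_{k,n+1}} k^{j-i} P_{k,i} P_{k,n-j+1}$. Setting $i = j$ in this expression (where $k^{0} = 1$ and $(-1)^{2i} = 1$) recovers exactly the diagonal value $\tfrac{1}{P_{k,n+1}} P_{k,i} P_{k,n-i+1}$, so the strict and diagonal cases merge into the single branch $i \leq j$ claimed in the theorem. For $i > j$, the factor $c_j \cdots c_{i-1}$ is a product of $i-j$ copies of $-1$, contributing $(-1)^{i-j}$; combined with the prefactor this gives $(-1)^{i+j}(-1)^{i-j} = (-1)^{2i} = 1$, so the entry reduces to $\tfrac{1}{P_{k,n+1}} P_{k,j} P_{k,n-i+1}$, matching the second branch.

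I expect the only real obstacle to be bookkeeping rather than anything conceptual: correctly pinning down the reflected index for $\phi$, so that the backward recurrence together with its boundary data $\phi_{n+1}=1$ and $\phi_n = 2$ line up with the forward $k$-Pell sequence, and carefully tracking the two sources of signs --- the $(-1)^{i+j}$ prefactor and the $(-1)^{i-j}$ produced by the product of the $c_i = -1$ entries --- so that they cancel and leave the clean formula stated.
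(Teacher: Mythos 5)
Your proposal is correct and follows essentially the same route as the paper: apply Usmani's Formula to the tridiagonal matrix $P_n(k)$, identify $\theta_i = P_{k,i+1}$ and $\phi_i = P_{k,n-i+2}$ via the $k$-Pell recurrence, and substitute. If anything, you are more explicit than the paper about the reindexing of $\phi$, the cancellation $(-1)^{i+j}(-1)^{i-j}=1$ in the $i>j$ case, and the merging of the diagonal case into the $i\leq j$ branch.
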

\begin{proof}First, note that $P_n(k)$ is nonsingular and tridiagonal. Thus, \[[(P_n(k))^{-1}]_{i,j} =  \left\{ \begin{array}{ll}
         (-1)^{i+j}\frac{1}{\theta_n}k^{j-i}\theta_{i-1}\phi_{j+1} &\text{if } i < j, \text{ and}\\
         \frac{1}{\theta_n}\theta_{j-1}\phi_{i+1} & \text{if } i \geq j\end{array} \right. \] where\begin{align*}
\phi_{n+1} &= 1\\
\phi_n &= 2\\
\phi_{n-1} &= 2\phi_n +k\phi_{n+1} = 4 + k\\
\phi_{n-2} &= 2\phi_{n-1} + k\phi_n = 8 + 4k\\
\phi_{n-3} &= 2\phi_{n-2} + k\phi_{n-1} = 16 + 12k+k^2\\
&\,\,\,\,\vdots\\
\phi_j &= 2\phi_{j+2} + k\phi_{j+1} = P_{k,n-j+2}
\end{align*}
and
\begin{align*}
\theta_0 &= 1\\
\theta_1 &= 2\\
\theta_2 &= 2(2) +k(1) = 4+k\\
\theta_3 &= 2(4+k) + k(2) = 8+4k\\
\theta_4 &= 2(8+4k) +k(4+k) = 16 + 12k + k^2\\
&\,\,\,\,\vdots\\
\theta_i &= 2\theta_{i-1} + k\theta_{i-2} = P_{k,i+1}
\end{align*}
Therefore, Usmani's Formula for $(P_n(k))^{-1}$ becomes \[[(P_n(k))^{-1}]_{i,j} = \left\{ \begin{array}{ll}
         (-1)^{i+j}\frac{1}{P_{k,n+1}}k^{j-i}P_{k,i}P_{k,n-j+1} &\text{if } i\leq j, \text{ and}\\
         \frac{1}{P_{k,n+1}}P_{k,j}P_{k,n-i+1} & \text{if } i > j.\end{array} \right. \]
\end{proof}
\par Note that the entries of the matrix of cofactors for $P_n(k)$ follow directly from Theorem \ref{cof:usmaniP}. \[ c_{i,j}(P_n(k))= \left\{ \begin{array}{ll}
         (-1)^{i+j}k^{i-j}P_{k,j}P_{k,n-i+1} &\text{if } i \geq j, \text{ and}\\
         P_{k,i}P_{k,n-j+1} & \text{if } i < j.\end{array} \right. \]
The matrix of cofactors for $P_n(k)$ for $n\geq 2$ is given by 
\begin{align}\label{cof:C}C_{n}(k) = \scriptsize \begin{bmatrix}
P_{k,n} & P_{k,n-1} & P_{k,n-2} &  \cdots  & P_{k,2} & P_{k,1} \\
-kP_{k,n-1} & P_{k,2}P_{k,n-1} & P_{k,2}P_{k,n-2} & \cdots  & P_{k,2}P_{k,2} & P_{k,2}\\
k^2P_{k,n-1} & -kP_{k,2}P_{k,n-2} & P_{k,3}P_{k,n-2} & \cdots & P_{k,3}P_{k,2} & P_{k,3}\\
\vdots & \vdots & \vdots & \ddots & \vdots & \vdots\\
(-1)^{n}k^{n-2}P_{k,2} & (-1)^{n+1}k^{n-3}P_{k,2}P_{k,2} & (-1)^{n+2}k^{n-4}P_{k,3}P_{k,2} & \cdots& P_{k,n-1}P_{k,2} & P_{k,n-1}\\
(-1)^{n+1}k^{n-1}P_{k,1} & (-1)^{n+2}k^{n-2}P_{k,2} & (-1)^{n+3}k^{n-3}P_{k,3} & \cdots& (-1)^{2n-1}kP_{k,n-1} & P_{k,n}
\end{bmatrix}\end{align}
By examining the determinant of this matrix, the following lemma arises almost instantly.
\begin{proposition}\label{cof:Plem} Let $P_n(k)$ be the matrix defined in \eqref{gen:P} and let $C_{n}(k)$ be the $n\times n$ matrix of cofactors for $P_n(k)$. Then $\la C_{n}(k) \ra = P_{k,n+1}^{n-1}$.
\end{proposition}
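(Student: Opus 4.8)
The plan is to avoid expanding $\la C_n(k)\ra$ from the explicit entries displayed in \eqref{cof:C}, and instead to use the standard relationship between a square matrix, its matrix of cofactors, and its adjugate. First I would recall that the adjugate of $P_n(k)$ is precisely the transpose of its cofactor matrix, i.e.\ $\operatorname{adj}(P_n(k)) = C_n(k)^{T}$. Since transposition preserves determinants, this immediately gives $\la C_n(k)\ra = \la C_n(k)^{T}\ra = \la \operatorname{adj}(P_n(k))\ra$, so it suffices to compute the determinant of the adjugate.

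Next I would invoke the fundamental identity $P_n(k)\,\operatorname{adj}(P_n(k)) = \la P_n(k)\ra\, I_n$, which holds for every $n\times n$ matrix. Taking determinants of both sides and using multiplicativity of the determinant yields
\[ \la P_n(k)\ra \cdot \la\operatorname{adj}(P_n(k))\ra = \la P_n(k)\ra^{\,n}. \]
The one fact that must be checked is that $P_n(k)$ is nonsingular, so that a single factor of $\la P_n(k)\ra$ may be cancelled. This is immediate: by the generating-matrix identity $\la P_n(k)\ra = P_{k,n+1}$ (equivalently by Theorem \ref{ten:closeP}), and $P_{k,n+1}$ is a positive integer for every $n\geq 1$. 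Cancelling then gives $\la\operatorname{adj}(P_n(k))\ra = \la P_n(k)\ra^{\,n-1} = P_{k,n+1}^{\,n-1}$.

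Combining the two steps, $\la C_n(k)\ra = \la\operatorname{adj}(P_n(k))\ra = P_{k,n+1}^{\,n-1}$, which is the claim. I do not expect any genuine obstacle in this argument; the only points requiring care are the transposition relating the cofactor matrix to the adjugate (harmless at the level of determinants) and the appeal to nonsingularity needed to justify the cancellation. If one preferred an argument that avoids citing the general fact $\det(\operatorname{adj}(A)) = \det(A)^{n-1}$, the same identity $P_n(k)\,\operatorname{adj}(P_n(k)) = P_{k,n+1}\, I_n$ could be combined with the explicit form \eqref{cof:C}, but the adjugate route above is by far the cleaner path.
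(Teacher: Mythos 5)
Your proposal is correct and follows essentially the same route as the paper: both pass from the cofactor matrix to the adjugate via transposition, apply the identity $P_n(k)\,\mathrm{adj}(P_n(k)) = \la P_n(k)\ra I_n$, take determinants, and cancel one factor of $\la P_n(k)\ra = P_{k,n+1}$. Your explicit remark that nonsingularity justifies the cancellation is a small point of extra care that the paper leaves implicit (it starts from the inverse, which already presupposes it), but the argument is the same.
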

\begin{proof} Assuming the hypotheses of the lemma, it follows that \[ (P_n(k))^{-1} =  \frac{1}{\la P_n(k)\ra}\text{Adj}(P_n(k)) = \frac{1}{\la P_n(k)\ra}C_{n}(k)^T.\] Multiplying on the left by $\la P_n(k)\ra P_n(k)$ yields $\ds\la P_n(k)\ra I_n  = P_n(k)C_{n}(k)^T$. By properties of the determinant and since $\la P_n(k)\ra = P_{k,n+1}$, the following implications hold. \begin{align*} 
\la \la P_n(k)\ra I_n \ra = \la P_n(k)C_{n}(k)^T\ra &\implies 
\la P_n(k)\ra^n = \la P_n(k)\ra\la C_{n}(k)\ra \\ &\implies 
P_{k,n+1}^n = P_{k,n+1}\la C_{n}(k)\ra \\ &\implies
P_{k,n+1}^{n-1} = \la C_{n}(k)\ra
\end{align*}
\end{proof}
By using Proposition \ref{cof:Plem} and the fact that $\ds P_{k,n} = \frac{P_{k,n+1}-kP_{k,n-1}}{2}$ (by the definition of the $k$-Pell sequence), an interesting result can be derived from the determinant of $C_{n}(k)$. This is the subject of Theorem \ref{cof:Pthm}, the proof of which will employ the use of the following two lemmas. Note that Lemma \ref{cof:lemlem} was originally conjectured and proven in \cite[Proposition 5]{5}.
\begin{lemma}\label{cof:lemlem} Let $P_{k,i}$ denote the $i^{\text{th}}$ term of the $k$-Pell sequence and let $m,n\in\ZZ^+$. Then\begin{enumerate}
\item\label{cof:lemlem1}$P_{k,n+m} = kP_{k,n-1}P_{k,m}+P_{k,n}P_{k,m+1}, \text{ and}$
\item\label{cof:lemlem2}$2P_{k,n+m} = P_{k,n+1}P_{k,m+1}-k^2P_{k,m-1}P_{k,n-1}$
\end{enumerate}
\end{lemma}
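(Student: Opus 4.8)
The plan is to prove the two identities in sequence: establish \eqref{cof:lemlem1} first by induction, and then deduce \eqref{cof:lemlem2} from it by a short algebraic manipulation rather than a separate induction.

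For part \eqref{cof:lemlem1}, I would fix $n\in\ZZ^+$ and induct on $m$. Since the defining recurrence $P_{k,n}=2P_{k,n-1}+kP_{k,n-2}$ is second order, a two-step induction is needed, so I would verify two base cases. For $m=0$ the right-hand side is $kP_{k,n-1}P_{k,0}+P_{k,n}P_{k,1}=P_{k,n}=P_{k,n+0}$, using $P_{k,0}=0$ and $P_{k,1}=1$; for $m=1$ it is $kP_{k,n-1}P_{k,1}+P_{k,n}P_{k,2}=kP_{k,n-1}+2P_{k,n}=P_{k,n+1}$, using $P_{k,2}=2$ and the recurrence. For the inductive step, assuming the identity for $m$ and for $m-1$, I would write $P_{k,n+m+1}=2P_{k,n+m}+kP_{k,n+m-1}$, substitute both hypotheses, and regroup as $kP_{k,n-1}(2P_{k,m}+kP_{k,m-1})+P_{k,n}(2P_{k,m+1}+kP_{k,m})$. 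Applying the recurrence to each parenthesised factor turns these into $P_{k,m+1}$ and $P_{k,m+2}$, giving $kP_{k,n-1}P_{k,m+1}+P_{k,n}P_{k,m+2}$, which is exactly the identity for $m+1$.

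For part \eqref{cof:lemlem2}, I would avoid a fresh induction and instead combine part \eqref{cof:lemlem1} with the halving identity $2P_{k,n}=P_{k,n+1}-kP_{k,n-1}$, which is merely a rearrangement of the recurrence and is the same identity already invoked in the surrounding text. Doubling part \eqref{cof:lemlem1} gives $2P_{k,n+m}=kP_{k,n-1}(2P_{k,m})+(2P_{k,n})P_{k,m+1}$; substituting $2P_{k,m}=P_{k,m+1}-kP_{k,m-1}$ and $2P_{k,n}=P_{k,n+1}-kP_{k,n-1}$ and cancelling the two equal-and-opposite $kP_{k,n-1}P_{k,m+1}$ terms leaves $P_{k,n+1}P_{k,m+1}-k^2P_{k,n-1}P_{k,m-1}$, which is the claim.

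The routine but error-prone step is the regrouping inside the inductive step of part \eqref{cof:lemlem1}, where the index bookkeeping on the various $P_{k,\bullet}$ must be tracked carefully; the genuinely important idea is the substitution of the halving identity in part \eqref{cof:lemlem2}, which is precisely what converts the asymmetric addition formula into the symmetric one. As an alternative to the induction, one could compute part \eqref{cof:lemlem1} directly from Binet's formula $P_{k,n}=(r_1^n-r_2^n)/(r_1-r_2)$ together with $r_1r_2=-k$; there the only obstacle is patiently tracking the four cross terms until the mixed products cancel and a common factor of $r_1-r_2$ factors out of the numerator.
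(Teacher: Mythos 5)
Your proof is correct, and for part \eqref{cof:lemlem2} it takes a genuinely different route from the paper. For part \eqref{cof:lemlem1} you and the paper agree: both argue by induction on $m$ with $n$ fixed (the paper defers the details to its cited source, while you carry out the two base cases and the two-step inductive step explicitly, and your index bookkeeping checks out). The real divergence is in part \eqref{cof:lemlem2}: the paper runs a second, independent strong induction on $m$, writing $2P_{k,n+m'} = 2(2P_{k,n+m'-1})+k(2P_{k,n+m'-2})$ and substituting the inductive hypothesis into each doubled term, whereas you derive \eqref{cof:lemlem2} in three lines from \eqref{cof:lemlem1} by doubling it and substituting $2P_{k,m}=P_{k,m+1}-kP_{k,m-1}$ and $2P_{k,n}=P_{k,n+1}-kP_{k,n-1}$, after which the two $kP_{k,n-1}P_{k,m+1}$ terms cancel. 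Your computation is valid, and it is arguably preferable: it avoids duplicating the inductive machinery, it makes transparent \emph{why} the symmetric identity follows from the asymmetric addition formula, and it reuses exactly the rearranged recurrence $P_{k,n}=\frac{P_{k,n+1}-kP_{k,n-1}}{2}$ that the paper itself highlights just before Theorem \ref{cof:Pthm}. The paper's double-induction, by contrast, keeps the two parts logically independent, which costs length but means part \eqref{cof:lemlem2} does not depend on part \eqref{cof:lemlem1} having been established first. One minor remark: your base case $m=0$ lies outside the stated hypothesis $m\in\ZZ^+$, but since $P_{k,0}=0$ is defined and the identity holds there, using it as an anchor for the two-step induction is harmless.
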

\begin{proof} To prove \eqref{cof:lemlem1}, let $k,n\in\ZZ^+$ be arbitrary and induct on $m$ (see \cite[Proposition 5]{5} for details). To prove \eqref{cof:lemlem2}, let $k,n\in\ZZ^+$ be arbitrary, and proceed by induction on $m$. If $m=1$, then by the definition of the $k$-Pell sequence, $2P_{k,n+1} = 2P_{k,n+1}-(0)k^2P_{k,n-1} = P_{k,n+1}P_{k,2}-k^2P_{k,0}P_{k,n-1}$. Let $1<m'\in\ZZ^+$ and assume that $2P_{k,n+m} = P_{k,n+1}P_{k,m+1}-k^2P_{k,m-1}P_{k,n-1}$ holds for all $m<m'$. By this hypothesis, the following equalities hold. \begin{align*}
2P_{k,n+m'} &= 2\lp  2P_{k,n+m'-1}+kP_{k,n+m'-2} \rp \\ 
&= 2(2P_{k,n+m'-1})+k(2P_{k,n+m'-2})\\
&= 2\lp P_{k,n+1}P_{k,m'}-k^2P_{k,m'-2}P_{k,n-1}\rp +k\lp P_{k,n+1}P_{k,m'-1}-k^2P_{k,m'-3}P_{k,n-1}\rp\\
&= 2P_{k,n+1}P_{k,m'}+kP_{k,n+1}P_{k,m'-1}-2k^2P_{k,m'-2}P_{k,n-1}-k^3P_{k,m'-3}P_{k,n-1}\\
&= P_{k,n+1}\lp 2P_{k,m'}+kP_{k,m'-1}\rp -k^2P_{k,n-1}\lp 2P_{k,m'-2}+kP_{k,m'-3}\rp \\ 
&= P_{k,n+1}P_{k,m'+1}-k^2P_{k,m'-1}P_{k,n-1}
\end{align*} Since $n\in\ZZ^+$ was arbitrary, the result follows by mathematical induction.
\end{proof}
Now, by examining the determinant of $C_{n}(k)$ as defined in \eqref{cof:C} for $n = 1,2,3$, the following relations hold true by Lemma \ref{cof:Plem}. \begin{align*}
\la C_1(k)\ra &= P_{k,1} = 1\\
\la C_2(k)\ra &= \la\begin{matrix}
P_{k,2} & P_{k,1}\\
-kP_{k,1} & P_{k,2}\\
\end{matrix}\ra = P_{k,3}\\
&\implies P_{k,2}^2+kP_{k,1}^2 = P_{k,3}\\
\la C_3(k)\ra &= \la\begin{matrix}
P_{k,3} & P_{k,2} & P_{k,1}\\
-kP_{k,2} & P_{k,2}^2 & P_{k,2}\\
k^2P_{k,1} & -kP_{k,2} & P_{k,3}\\
\end{matrix}\ra = P_{k,4}^2\\
&\implies P_{k,3}\lp P_{k,2}^2P_{k,3}+kP_{k,2}^2\rp + P_{k,2}\lp kP_{k,2}P_{k,3}+k^2P_{k,2}P_{k,1}\rp = P_{k,4}^2\\
&\implies P_{k,2}^2\lp P_{k,3}^2+2kP_{k,3}P_{k,1} + k^2P_{k,1}^2\rp = P_{k,4}^2\\
&\implies \lp \frac{P_{k,3}-kP_{k,1}}{2} \rp^2\lp P_{k,3}+ kP_{k,1}\rp^2 = P_{k,4}^2\\
&\implies \ts{\frac{1}{4}}\lp P_{k,3}^2 - k^2P_{k,1}^2\rp ^2= P_{k,4}^2\\
&\implies P_{k,3}^2 - k^2P_{k,1}^2 = 2P_{k,4}
\end{align*}
Theorem \ref{cof:Pthm} is the result of the generalization of the determinants of $C_{n-1}(k)$. Note that \eqref{cof:part2} was originally proven in \cite[Proposition 6]{5}.
\begin{theorem}\label{cof:Pthm} Let $P_{k,n}$ denote the $n^{\text{th}}$ term of the $k$-Pell sequence. Then\begin{enumerate}
\item\label{cof:part1}$P_{k,n+1}^2+kP_{k,n}^2 = P_{k,2n+1}, \text{ and}$
\item\label{cof:part2}$P_{k,n+1}^2-k^2P_{k,n-1}^2 = 2P_{k,2n}$
\end{enumerate}\end{theorem}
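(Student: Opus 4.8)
The plan is to obtain both identities as immediate specializations of the addition formulas recorded in Lemma \ref{cof:lemlem}; the genuine work has already been absorbed into that lemma (both of whose parts are established by induction on the second index), so here the task reduces to choosing the indices correctly so that the right-hand sides collapse into squares. This is the natural payoff of having stated the lemma in the ``symmetric'' two-index form.

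For part \eqref{cof:part1}, I would apply the first identity of Lemma \ref{cof:lemlem}, namely $P_{k,n+m} = kP_{k,n-1}P_{k,m}+P_{k,n}P_{k,m+1}$, after the substitution $n\mapsto n+1$ and $m\mapsto n$. The left-hand side becomes $P_{k,(n+1)+n}=P_{k,2n+1}$, while the right-hand side becomes $kP_{k,(n+1)-1}P_{k,n}+P_{k,n+1}P_{k,n+1}=P_{k,n+1}^2+kP_{k,n}^2$. Equating these gives $P_{k,2n+1}=P_{k,n+1}^2+kP_{k,n}^2$, which is exactly \eqref{cof:part1}.

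For part \eqref{cof:part2}, I would instead apply the second identity of Lemma \ref{cof:lemlem}, namely $2P_{k,n+m}=P_{k,n+1}P_{k,m+1}-k^2P_{k,m-1}P_{k,n-1}$, with the single substitution $m\mapsto n$. The left-hand side becomes $2P_{k,2n}$, and the right-hand side becomes $P_{k,n+1}P_{k,n+1}-k^2P_{k,n-1}P_{k,n-1}=P_{k,n+1}^2-k^2P_{k,n-1}^2$, yielding \eqref{cof:part2} directly.

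Since Lemma \ref{cof:lemlem} is assumed, there is no real obstacle remaining; the only points demanding care are the index bookkeeping (confirming that the diagonal choices $m=n$ and $(n+1,n)$ are legitimate for all $n\in\ZZ^+$, which they are once $P_{k,0}=0$ is kept in mind so that no term falls outside the defined range) and the sanity check that these general identities specialize correctly to the low-order relations $P_{k,2}^2+kP_{k,1}^2=P_{k,3}$ and $P_{k,3}^2-k^2P_{k,1}^2=2P_{k,4}$ read off earlier from $\la C_2(k)\ra$ and $\la C_3(k)\ra$. That consistency check is precisely what ties the theorem back to the cofactor-determinant computations that motivated it.
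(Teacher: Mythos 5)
Your proposal is correct and follows essentially the same route as the paper: both parts are obtained by specializing Lemma \ref{cof:lemlem} on the diagonal, taking the indices $(n+1,n)$ in the first identity and $(n,n)$ in the second. Your version is, if anything, slightly more explicit about the substitutions than the paper's one-line argument.
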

\begin{proof} To prove \eqref{cof:part1}, let $n = m+1$ and apply Lemma \ref{cof:lemlem}\eqref{cof:lemlem1} to obtain $P_{k,m+1}^2+kP_{k,m}^2 = P_{k,2m+1}$, as desired. To prove \eqref{cof:part2}, let $n = m$ and apply Lemma \ref{cof:lemlem}\eqref{cof:lemlem2}. This yields $P_{k,n+1}^2-k^2P_{k,n-1}^2 = 2P_{k,2n}$, which completes the proof.
\end{proof}
\par The next objective concerns the matrix of cofactors for the matrix $G_n(k)$ as defined in \eqref{gen:G}. Again, using Usmani's formula, the following theorem regarding the inverse of $G_n(k)$ arises.
\begin{theorem}\label{cof:usmaniG} Let $G_n(k)$ be the matrix defined in \eqref{gen:G} and let $G_{k,n}$ and $P_{k,n}$ denote the $n^{\text{th}}$ terms of the Generalized $k$-Pell and $k$-Pell sequences, respectively. The entries of $(G_n(k))^{-1}$ are given by \[[(G_n(k))^{-1}]_{i,j} = \left\{ \begin{array}{ll}
	(-1)^{j+1}\frac{1}{G_{k,n+1}}ak^{j-1}P_{k,n-j+1} &\text{if } 1=i < j,\\
         (-1)^{i+j}\frac{1}{G_{k,n+1}}k^{j-i}G_{k,i}P_{k,n-j+1} &\text{if } 1< i \leq j,\\
         \frac{1}{G_{k,n+1}}P_{k,n-i+1} & \text{if } i \geq j =1, \text{ and}\\
         \frac{1}{G_{k,n+1}}G_{k,j}P_{k,n-i+1} & \text{if } i > j >1 .\end{array} \right. \]
\end{theorem}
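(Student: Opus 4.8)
The plan is to apply Usmani's Formula directly, mirroring the proof of Theorem \ref{cof:usmaniP} but tracking the modified top-left entries of $G_n(k)$. In Usmani's notation, the matrix $G_n(k)$ has $a_1 = ak+2a$ and $b_1 = ak$ in its first row, while $a_i = 2$, $b_i = k$, and $c_i = -1$ for all remaining indices. The asymmetry between the first row and the rest is the only genuine difference from the $k$-Pell case, and it is precisely what will force the four-case form of the conclusion rather than the two-case form of Theorem \ref{cof:usmaniP}.

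First I would identify the auxiliary sequences $\theta_i$ and $\phi_i$. Since $\theta_i$ is the determinant of the leading $i\times i$ principal submatrix of $G_n(k)$, which is exactly $G_i(k)$, the generating-matrix theorem gives $\theta_i = \la G_i(k)\ra = G_{k,i+1}$ for $1\leq i\leq n$, while $\theta_0 = 1$ by definition. The crucial point is that $\theta_0 = 1 \neq a = G_{k,1}$, so the pattern $\theta_i = G_{k,i+1}$ fails at $i=0$. Dually, $\phi_j$ is the determinant of the trailing submatrix on indices $\{j,\ldots,n\}$; for $j\geq 2$ this submatrix carries only the generic entries and therefore equals $P_{n-j+1}(k)$, giving $\phi_j = P_{k,n-j+2}$ for $2\leq j\leq n+1$ exactly as in Theorem \ref{cof:usmaniP}. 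I would note that the formula never calls for $\phi_1$, so the fact that $\phi_1$ does not fit this pattern (because of the special $b_1$) causes no trouble.

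With these in hand, substituting into the three branches of Usmani's Formula is routine, and the four stated cases arise according to whether $i$ or $j$ equals $1$. In the strictly-upper branch the band product is $b_i b_{i+1}\cdots b_{j-1}$; when $i=1$ the lone factor $b_1 = ak$ contributes the extra $a$ and yields $ak^{j-1}$, whereas for $i>1$ it is simply $k^{j-i}$, explaining why the first row carries an $a$ but the interior upper-triangular entries do not. On the diagonal and in the lower branch, $\theta_{i-1}$ (resp.\ $\theta_{j-1}$) equals $G_{k,i}$ (resp.\ $G_{k,j}$) when the index exceeds $1$, but collapses to $\theta_0 = 1$ when it equals $1$, which is exactly why the first column has no $G$-factor. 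Finally, in the lower branch the sign product $c_j\cdots c_{i-1} = (-1)^{i-j}$ cancels the prefactor $(-1)^{i+j}$, leaving the positive entries recorded in the statement.

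The calculations themselves are mechanical; the only real care required is the bookkeeping of the special first row and column. The single genuinely subtle point is the discrepancy $\theta_0 = 1 \neq a$: it is what breaks the symmetry between the first row and first column of $(G_n(k))^{-1}$ and thereby produces four cases rather than two. I would organize the write-up so that this observation is made explicit before the case analysis, so that each of the four cases then follows by direct substitution.
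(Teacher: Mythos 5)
Your proposal is correct and follows essentially the same route as the paper: apply Usmani's Formula, observe that $\phi_j = P_{k,n-j+2}$ exactly as in the $k$-Pell case (since $\phi_1$ is never needed), that $\theta_i = G_{k,i+1}$ for $i\geq 1$ while $\theta_0=1$, and that the special first-row entries $a_1=ak+2a$, $b_1=ak$ produce the four cases. Your explicit remarks on the sign cancellation in the lower branch and on the $\theta_0=1\neq a$ discrepancy are sound and, if anything, make the case analysis more transparent than the paper's write-up.
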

\begin{proof} Note that $G_n(k)$ satisfies the hypotheses of Usmani's Formula. Thus, \[[(G_n(k))^{-1}]_{i,j} =  \left\{ \begin{array}{ll}
	(-1)^{i+j}\frac{1}{\theta_n}ak^{j-i}\theta_{0}\phi_{j+1} &\text{if } 1=i < j,\\
         (-1)^{i+j}\frac{1}{\theta_n}k^{j-i}\theta_i\phi_{j+1} &\text{if } 1< i \leq j,\\
         \frac{1}{\theta_n}\phi_{i+1} & \text{if } i \geq j =1, \text{ and}\\
         \frac{1}{\theta_n}\theta_{j}\phi_{i+1} & \text{if } i > j >1 .\end{array} \right. \] where\begin{align*}
\phi_{n+1} &= 1\\
\phi_n &= 2\\
\phi_{n-1} &= 2\phi_n +k\phi_{n+1} = 4 + k\\
\phi_{n-2} &= 2\phi_{n-1} + k\phi_n = 8 + 4k\\
\phi_{n-3} &= 2\phi_{n-2} + k\phi_{n-1} = 16 + 12k+k^2\\
&\,\,\,\,\vdots\\
\phi_j &= 2\phi_{j+2} + k\phi_{j+1} = P_{k,n-j+2}
\end{align*}
and
\begin{align*}
\theta_0 &= 1\\
\theta_1 &= ak+2a = G_{k,2}\\
\theta_2 &= 2(ak+2a) -ak(-1)(1) = 3ak+4a = G_{k,3}\\
\theta_3 &= 2(3ak+4a) -k(-1)(ak+2a) = ak^2+8ak+8a = G_{k,4}\\
&\,\,\,\,\vdots\\
\theta_i &= 2\theta_{i-1} + k\theta_{i-2} = G_{k,i+1}
\end{align*}
Note that the definition of $\{\phi_j\}$ for $G_n(k)$ is the same as it was for $P_n(k)$, due to the fact that $P_n(k)$ and $G_n(k)$ only differ in their first rows and the fact that Usmani's Formula does not account for $\phi_1$. Combining all cases, the result follows by Usmani's formula.
\end{proof}
By the same argument as for the matrix of cofactors for $P_n(k)$, the entries of the matrix of cofactors for $G_n(k)$ are\[c_{i,j}(G_n(k))=\left\{ \begin{array}{ll}
	(-1)^{i+j}ak^{i-j}P_{k,n-i+1} &\text{if } i > j=1,\\
         (-1)^{i+j}k^{i-j}G_{k,j}P_{k,n-i+1} &\text{if } i \geq j>1,\\
        P_{k,n-j+1} & \text{if } 1 = i \leq j, \text{ and}\\
        G_{k,i}P_{k,n-j+1} & \text{if } 1<i < j .\end{array} \right.\]
Thus, the matrix of cofactors for $G_n(k)$ for $n\geq 2$ can be expressed as $D_{n}(k)=$
\[\scriptsize\begin{bmatrix}
P_{k,n} & P_{k,n-1} & P_{k,n-2} &  \cdots  & P_{k,2} & P_{k,1}\\
-akP_{k,n-1} & G_{k,2}P_{k,n-1} & G_{k,2}P_{k,n-2} & \cdots  & G_{k,2}P_{k,2} & G_{k,2}P_{k,1}\\
ak^2P_{k,n-2} & -kG_{k,2}P_{k,n-2} & G_{k,3}P_{k,n-2} & \cdots & G_{k,3}P_{k,2} & G_{k,3}P_{k,1}\\
\vdots & \vdots & \vdots & \ddots & \vdots & \vdots\\
(-1)^{n}ak^{n-2}P_{k,2} & (-1)^{n+1}k^{n-3}G_{k,2}P_{k,2} & (-1)^{n+2}k^{n-4}G_{k,3}P_{k,2} & \cdots& G_{k,n-1}P_{k,2} & G_{k,n-1}P_{k,1}\\
(-1)^{n+1}ak^{n-1}P_{k,1} & (-1)^{n+2}k^{n-2}G_{k,2}P_{k,1} & (-1)^{n+3}k^{n-3}G_{k,3}P_{k,1} & \cdots& (-1)^{2n-1}kG_{k,n-1}P_{k,1} & G_{k,n}P_{k,1}\\
\end{bmatrix}_{n\times n}\]
A result similar to Proposition \ref{cof:Plem} is constructed from the determinant of $D_n(k)$, which is the objective of Proposition \ref{cof:LEMG}.
\begin{proposition}\label{cof:LEMG} Let $G_n(k)$ be the matrix defined in \eqref{gen:G} and let $D_{n}(k)$ be the $n\times n$ matrix of cofactors for $G_n(k)$. Then $\la D_{n}(k)\ra = G_{k,n+1}^{n-1}$.
\end{proposition}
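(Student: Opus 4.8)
The plan is to mirror the proof of Proposition \ref{cof:Plem} essentially verbatim, since $D_n(k)$ stands in exactly the same relationship to $G_n(k)$ as $C_n(k)$ does to $P_n(k)$. First I would observe that $G_n(k)$ is nonsingular: by the earlier generating-matrix theorem $\la G_n(k)\ra = G_{k,n+1}$, and since $a\in\ZZ^+$ the Generalized $k$-Pell sequence is strictly positive, so $G_{k,n+1}\neq 0$. This makes the adjugate identity $(G_n(k))^{-1} = \frac{1}{\la G_n(k)\ra}\text{Adj}(G_n(k))$ available, and because the adjugate of a matrix is the transpose of its cofactor matrix, we have $\text{Adj}(G_n(k)) = D_n(k)^T$.

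Next I would clear denominators. Multiplying the identity $(G_n(k))^{-1} = \frac{1}{\la G_n(k)\ra}D_n(k)^T$ on the left by $\la G_n(k)\ra\, G_n(k)$ gives $\la G_n(k)\ra I_n = G_n(k)\,D_n(k)^T$. Taking determinants of both sides, and using multiplicativity of the determinant together with the fact that $\la D_n(k)^T\ra = \la D_n(k)\ra$, yields $\la G_n(k)\ra^n = \la G_n(k)\ra\,\la D_n(k)\ra$. Substituting $\la G_n(k)\ra = G_{k,n+1}$ and cancelling a single factor of $G_{k,n+1}$ (legitimate precisely because it is nonzero) gives $\la D_n(k)\ra = G_{k,n+1}^{n-1}$, as claimed.

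There is essentially no obstacle here: the argument is purely formal and identical in structure to Proposition \ref{cof:Plem}, the only point requiring genuine care being the nonsingularity of $G_n(k)$, which is guaranteed by the hypothesis $a\neq 0$ that forces $G_{k,n+1}>0$. It is worth emphasizing that the explicit form of $D_n(k)$ displayed above is not actually needed for this determinant computation; only the abstract adjugate relation $G_n(k)\,D_n(k)^T = \la G_n(k)\ra I_n$ drives the proof, so the detailed Usmani-based description of the cofactor entries plays no role beyond confirming that $D_n(k)$ is indeed the cofactor matrix.
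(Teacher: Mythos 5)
Your proposal is correct and matches the paper exactly: the paper's proof of this proposition is literally ``change $P_n(k)$ for $G_n(k)$ in the proof of Proposition \ref{cof:Plem},'' and that earlier proof is precisely the adjugate-identity-plus-determinant argument you give. Your added remarks on nonsingularity and on the irrelevance of the explicit form of $D_n(k)$ are accurate but not load-bearing beyond what the paper already assumes.
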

\begin{proof} Change $P_n(k)$ for $G_n(k)$ in the proof of Proposition \ref{cof:Plem}.
\end{proof}
Consider the first few determinants of $D_{n}(k)$ for $n = 1,2,3,4,$ given below.\begin{align*}
\la D_1(k)\ra &= P_{k,1} = G_{k,2}^0\\
\la D_2(k)\ra &= \la\begin{matrix}
P_{k,2} & P_{k,1}\\
-akP_{k,1} & G_{k,2}P_{k,1}\\
\end{matrix}\ra = G_{k,3}\\
&\implies G_{k,2}P_{k,2}+kG_{k,1}P_{k,1}= G_{k,3}\\
\la D_3(k)\ra &= \la\begin{matrix}
P_{k,3} & P_{k,2} & P_{k,1}\\
-akP_{k,2} & G_{k,2}P_{k,2} & G_{k,2}P_{k,1}\\
ak^2P_{k,1} & -kG_{k,2}P_{k,1} & G_{k,3}P_{k,1}\\
\end{matrix}\ra = G_{k,4}^2\\
&\implies (kG_{k,2}P_{k,1}+G_{k,3}P_{k,2})(kG_{k,1}P_{k,2}+G_{k,2}P_{k,3}) = G_{k,4}^2\\
\la D_4(k)\ra &= \la\begin{matrix}
P_{k,4} & P_{k,3} & P_{k,2} & P_{k,1}\\
-akP_{k,3} & G_{k,2}P_{k,3} & G_{k,2}P_{k,2} & G_{k,2}P_{k,1}\\
ak^2P_{k,2} & -kG_{k,2}P_{k,2} & G_{k,3}P_{k,2} & G_{k,3}P_{k,1}\\
-ak^3P_{k,1} & k^2G_{k,2}P_{k,1} & -kG_{k,3}P_{k,1} & G_{k,4}P_{k,1}\\
\end{matrix}\ra = G_{k,5}^3\\
&\implies (kG_{k,3}P_{k,1}+G_{k,4}P_{k,2})(kG_{k,2}P_{k,2}+G_{k,3}P_{k,3})(kG_{k,1}P_{k,3}+G_{k,2}P_{k,4}) = G_{k,5}^3
\end{align*} The generalization of these facts is the subject of Theorem \ref{cof:gthm}. To prove Theorem \ref{cof:gthm}, the following lemma, which was originally proven in \cite[Proposition 1]{7}, will be employed. 
\begin{lemma}[Binet's Formula]\label{cof:binet} Let $P_{k,n}$ denote the $n^{\text{th}}$ term of the $k$-Pell sequence. Then \[P_{k,n} = \frac{(1+\sqrt{1+k})^n-(1-\sqrt{1+k})^n}{2\sqrt{1+k}}\]
\end{lemma}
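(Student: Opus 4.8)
The plan is to mirror the proof of Theorem~\ref{thm:binet}, since the $k$-Pell recurrence $P_{k,n}=2P_{k,n-1}+kP_{k,n-2}$ shares the characteristic equation $r^2-2r-k=0$ with the Generalized $k$-Pell sequence. As noted at the start of Section~2, this equation has two distinct roots $r_1=1+\sqrt{1+k}$ and $r_2=1-\sqrt{1+k}$, so any sequence obeying this recurrence has a closed form $P_{k,n}=c_1 r_1^n+c_2 r_2^n$ for constants $c_1,c_2$ fixed by the initial data.

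First I would impose the initial conditions $P_{k,0}=0$ and $P_{k,1}=1$. Evaluating the closed form at $n=0$ gives $c_1+c_2=0$, hence $c_2=-c_1$; substituting this into the $n=1$ equation yields $c_1(r_1-r_2)=1$. The one step needing attention is the evaluation of the root difference: here $r_1-r_2=(1+\sqrt{1+k})-(1-\sqrt{1+k})=2\sqrt{1+k}$, so $c_1=\tfrac{1}{2\sqrt{1+k}}$ and $c_2=-\tfrac{1}{2\sqrt{1+k}}$. Plugging these back in produces exactly
\[
P_{k,n}=\frac{r_1^n-r_2^n}{2\sqrt{1+k}}=\frac{(1+\sqrt{1+k})^n-(1-\sqrt{1+k})^n}{2\sqrt{1+k}},
\]
as claimed.

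There is no genuine obstacle in this argument, which is why the statement is cited as known from \cite{7}. The only point requiring care is that the two characteristic roots be distinct, equivalently $1+k>0$; this holds since $k\in\ZZ^+$, which both legitimizes the two-term closed form and guarantees $r_1-r_2\neq 0$ so that $c_1$ is well defined. (Alternatively, one could verify the formula directly by induction on $n$, checking the base cases $n=0,1$ and using that $r_1,r_2$ satisfy $r^2=2r+k$ to push the recurrence through, but the characteristic-root derivation is shorter and parallels the earlier Binet proofs in the paper.)
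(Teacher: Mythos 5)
Your proposal is correct and follows exactly the paper's approach: the paper likewise solves the characteristic equation $r^2-2r-k=0$ and applies the initial conditions $P_{k,0}=0$, $P_{k,1}=1$ to determine the coefficients. Your version simply carries out in detail the computation of $c_1$ and $c_2$ that the paper leaves to the reader.
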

\begin{proof} The characteristic equation associated to the $k$-Pell sequence is $r^2-2r-k=0$. Solving for the roots of this equation and applying the initial conditions of the $k$-Pell sequence, $P_{k,0} = 0, P_{k,1} = 1$, completes the proof. 
\end{proof}
\begin{theorem}\label{cof:gthm} Let $P_{k,n}$ and $G_{k,n}$ denote the $n^{\text{th}}$ terms of the $k$-Pell and Generalized $k$-Pell sequences, respectively. Then \[G_{k,n+1} = kG_{k,i}P_{k,n-i}+G_{k,i+1}P_{n+1-i}\] for each $i\in\ZZ^+$.
\end{theorem}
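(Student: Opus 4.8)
The plan is to prove the identity by invoking the two Binet formulas already available to us: Theorem~\ref{thm:binet} for the Generalized $k$-Pell sequence and Lemma~\ref{cof:binet} for the $k$-Pell sequence. Writing $r_1 = 1+\sqrt{1+k}$ and $r_2 = 1-\sqrt{1+k}$, recall that $G_{k,m} = \frac{a}{2}(r_1^m + r_2^m)$ and $P_{k,m} = \frac{1}{r_1-r_2}(r_1^m - r_2^m)$, and that $r_1 r_2 = -k$. First I would substitute these closed forms into the right-hand side $kG_{k,i}P_{k,n-i} + G_{k,i+1}P_{k,n+1-i}$ and pull out the common constant $\frac{a}{2(r_1-r_2)}$. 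Expanding the two products $(r_1^i+r_2^i)(r_1^{n-i}-r_2^{n-i})$ and $(r_1^{i+1}+r_2^{i+1})(r_1^{n+1-i}-r_2^{n+1-i})$ produces four ``pure'' terms in $r_1^n, r_1^{n+2}, r_2^n, r_2^{n+2}$ together with four ``mixed'' terms of the form $r_1^a r_2^b$ with $a+b=n$ or $n+2$.

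The key step is that the mixed terms cancel. After scaling the first product by $k$, the two mixed terms carrying the higher power of $r_2$ combine as $-r_1^i r_2^{n-i}(k + r_1 r_2)$, while the two carrying the higher power of $r_1$ combine as $r_1^{n-i}r_2^{i}(k + r_1 r_2)$. Since $r_1 r_2 = -k$, each factor $k + r_1 r_2$ vanishes, so only the pure terms survive and the bracket collapses to $k r_1^n + r_1^{n+2} - k r_2^n - r_2^{n+2}$.

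Finally I would recognize this surviving expression as $(r_1 - r_2)(r_1^{n+1} + r_2^{n+1})$: expanding the latter gives $r_1^{n+2} - r_2^{n+2} - r_1 r_2(r_1^n - r_2^n) = r_1^{n+2} - r_2^{n+2} + k(r_1^n - r_2^n)$, using $r_1 r_2 = -k$ once more, which matches term for term. Dividing by $r_1 - r_2$ and restoring the factor $\frac{a}{2}$ leaves $\frac{a}{2}(r_1^{n+1}+r_2^{n+1}) = G_{k,n+1}$, as desired.

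I expect the only genuine hurdle to be clean bookkeeping of signs across the eight expanded terms; conceptually the single relation $r_1 r_2 = -k$ does all the work, first annihilating the mixed terms and then folding the pure terms into the target Binet form. As a purely integer alternative that avoids the radicals, one may instead substitute $G_{k,i} = aP_{k,i}+akP_{k,i-1}$ and $G_{k,i+1}=aP_{k,i+1}+akP_{k,i}$ from Lemma~\ref{prop:lem}, regroup, and apply Lemma~\ref{cof:lemlem}\eqref{cof:lemlem1} twice to recover $P_{k,n}=kP_{k,i-1}P_{k,n-i}+P_{k,i}P_{k,n+1-i}$ and $P_{k,n+1}=kP_{k,i}P_{k,n-i}+P_{k,i+1}P_{k,n+1-i}$; the right-hand side then reduces to $a(kP_{k,n}+P_{k,n+1})$, which equals $G_{k,n+1}$ by Lemma~\ref{prop:lem} again. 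This second route requires $n>i$ so that Lemma~\ref{cof:lemlem} applies, with the boundary case $n=i$ handled trivially, whereas the Binet argument needs no such restriction.
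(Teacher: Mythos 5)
Your main argument is exactly the paper's proof: substitute the two Binet formulas, observe that the mixed terms carry the vanishing factor $k+r_1r_2$, and fold the surviving pure terms back into $\frac{a}{2}\left(r_1^{n+1}+r_2^{n+1}\right)=G_{k,n+1}$ (your factorization of the bracket as $(r_1-r_2)(r_1^{n+1}+r_2^{n+1})$ is a harmless cosmetic variant of the paper's use of $k+r_i^2=2k+2r_i$). The proposal is correct, and the integer alternative via Lemmas~\ref{prop:lem} and~\ref{cof:lemlem}\eqref{cof:lemlem1} also checks out, though the Binet route you lead with is the one the paper takes.
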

\begin{proof}
Let $r_1 = 1+\sqrt{1+k}$ and $r_2 = 1-\sqrt{1+k}$. Note that $r_1r_2 = -k$ and $r_1-r_2 = 2\sqrt{1+k}$. Recall from Theorem \ref{thm:binet} that Binet's Formula for the Generalized $k$-Pell sequence is \[G_{k,n} = \frac{ar_1^n + ar_2^n}{2}.\] By Theorem \ref{thm:binet} and Lemma \ref{cof:binet}, the following equalities hold.\begin{align*}
kG_{k,i}P_{k,n-i}+G_{k,i+1}P_{n+1-i} &= ak\lp\frac{r_1^i + r_2^i}{2}\rp\lp\frac{r_1^{n-i}-r_2^{n-i}}{r_1-r_2}\rp\\&\hspace{4em}+a\lp\frac{r_1^{i+1} + r_2^{i+1}}{2}\rp\lp\frac{r_1^{n+1-i}-r_2^{n+1-i}}{r_1-r_2}\rp\\
&= \frac{a}{4(r_1-r_2)}[k(r_1^i + r_2^i)(r_1^{n-i}-r_2^{n-i}) \\&\hspace{4em}+ (r_1^{i+1} + r_2^{i+1})(r_1^{n+1-i}-r_2^{n+1-i})]\\
&= \frac{a}{4(r_1-r_2)}[ k(r_1^n-r_1^ir_2^{n-i}+r_1^{n-i}r_2^i-r_2^n) \\&\hspace{4em}+ (r_1^{n+2}-r_1^{i+1}r_2^{n+1-i}+r_1^{n+1-i}r_2^{i+1}-r_2^{n+2})]\\
&= \frac{a}{4(r_1-r_2)}[ r_1^n(k+r_1^2)-r_1^ir_2^{n-i}(k+r_1r_2)\\&\hspace{4em}+r_1^{n-i}r_2^i(k+r_1r_2)-r_2^n(k+r_2^2)]\\
&= \frac{a}{4\sqrt{1+k}}[ r_1^n(2+2\sqrt{1+k}+2k)-r_1^ir_2^{n-i}(k-k)\\&\hspace{4em}+r_1^{n-i}r_2^i(k-k)-r_2^n(2-2\sqrt{1+k}+2k)]\\
&= \frac{a}{2\sqrt{1+k}}\lc r_1^n(1+\sqrt{1+k}+k)-r_2^n(1-\sqrt{1+k}+k)\rc\\
&= \ts{\frac{a}{2}}\lp r_1^nr_1-r_2^n(-1)r_2\rp\\
&= \frac{ar_1^{n+1}+ar_2^{n+1}}{2}\\
&= G_{k,n+1}
\end{align*}
\end{proof}

\appendix
\section{Table of Values} \label{App:Table}
Below is a table with the first few values $n = 0,1,2,3,\dots$ of $P_{n,k}$ and $G_{n,k}$ for arbitrary $a,k\in\ZZ^+$.
\begin{center}
\begin{tabular}{|c|c|c|}
\hline
 & $P_{k,n}$ & $G_{k,n}$\\
 \hline
$n=0$ & $0$ & $a$\\
\hline
$n=1$ & $1$ & $a$\\
\hline
$n=2$ & $2$ & $ka+2a$\\
\hline
$n=3$ & $k+4$ & $3ka+4a$\\
\hline
$n=4$ & $4k+8$ & $k^2a+8ka+8a$\\
\hline
$n=5$ & $k^2+12k+16$ & $5k^2a+20ka+16a$\\
\hline
$n=6$ & $6k^2+32k+32$ & $k^3a+18k^2a+48ka+32a$\\
\hline
$n=7$ & $k^3+24k^2+80k+64$ & $7k^3a+56k^2a+112ka+64a$\\
\hline
\end{tabular}
\end{center}

\section*{Acknowledgements}
The author would like to thank Dr. Roy Wensley and the rest of the Saint Mary's College of California School of Science for the opportunity to conduct this research under the Summer Research Program of 2015. The author would also like to thank Dr. Kristen Beck for all of her help and guidance in conducting this research project.

\bibliographystyle{amsxport}
\bibliography{mybib}

\end{document}